\newtheorem{thm}{Theorem}
\newtheorem*{thm*}{Theorem}
\theoremstyle{plain}
\newtheorem{Theorem}{\bf Theorem}[section]
\newtheorem{Lemma}{\bf Lemma}[section]
\newtheorem{Proposition}{\bf Proposition}[section]
\newtheorem{Corollary}{\bf Corollary}[section]
\newtheorem{Remark}{\bf Remark}[section]
\newtheorem{Example}{\bf Example}[section]
\newtheorem{Definition}{\bf Definition}[section]
\newenvironment{lemma}{\begin{Lemma}$\!\!\!$}{\end{Lemma}}
\newenvironment{proposition}{\begin{Proposition}$\!\!\!$}{\end{Proposition}}
\newenvironment{definition}{\begin{Definition}$\!\!\!$}{\end{Definition}}
\numberwithin{equation}{section}
\newcommand{\di}{\mathop{}\!\mathrm{d}}
\DeclareMathOperator{\supp}{\operatorname{supp}}
\newcommand{\dist}{\mathsf{d}}
\DeclareMathOperator{\Dist}{dist}
\def\XXint#1#2#3{{\setbox0=\hbox{$#1{#2#3}{\int}$}
\vcenter{\hbox{$#2#3$}}\kern-.5\wd0}}
\begin{document}
	
	\title[semilinear heat equations in the very weak framework]
	{Necessary conditions for the solvability of\\ fractional semilinear heat equations\\ in the very weak framework}
	
	
	\author{Kotaro Hisa}
	\address[Kotaro Hisa]{Graduate School of Mathematical Sciences, The University of Tokyo, 3-8-1 Komaba,
Meguro-ku, Tokyo 153-8914, Japan.}
	\email{hisak@ms.u-tokyo.ac.jp}
	

	\subjclass[2020]{Primary 35R11, Secondary 35A01, 35K15}
	
	\keywords{semilinear heat equation, fractional Laplacian, nonexistence of solutions, very weak solution}
	
	
	
	
	\begin{abstract}
In this paper we obtain necessary conditions  on the initial
value for the solvability of the Cauchy problem for semilinear heat equations.
These necessary conditions were already obtained in the  framework of integral solutions, but not in that of very weak ones.
We  establish a new proof method, which can derive the desired conditions in the framework of very weak solutions.
In particular, since any integral solution is a very weak solution, our conditions are more general.
	\end{abstract}
	\maketitle
	
\section{Introduction}
Consider the Cauchy problem for the fractional semilinear heat equation
\begin{equation*}
	{\rm (SHE)}\qquad
	\left\{
	\begin{aligned}
	&\partial_t u + (-\Delta)^\frac{\theta}{2} u = u^p,\quad && x\in\mathbb{R}^N\,\,\, t\in(0,T), \\
	&u\ge 0\quad && \mbox{a.e. in} \quad  \mathbb{R}^N\times (0,T), \\
	&u(0) = \mu && \mbox{in}\quad \mathbb{R}^N,
	\end{aligned}
	\right.
\end{equation*}
where  $N\geq1$,  $\theta\in(0,2]$, $p>1$, and
$\mu$ is a nonnegative Radon measure on $\mathbb{R}^N$.
Here, $(-\Delta)^{\theta/2}$ denotes the fractional power of the Laplacian $-\Delta$ in $\mathbb{R}^N$.
Let us recall the definition of the fractional Laplacian $(-\Delta)^{\theta/2}$.
Let  $\mathcal{S}(\mathbb{R}^N)$ be  the  Schwartz space in $\mathbb{R}^N$.
$\mathcal{F}$ denotes the Fourier transform on $\mathbb{R}^N$.
For $\theta\in(0,2)$, we define the  fractional Laplacian $(-\Delta)^{\theta/2}$ as the operator given by the Fourier multiplier $|\xi|^\theta$, that is, for $u\in \mathcal{S}(\mathbb{R}^N)$
\[
\mathcal{F}[(-\Delta)^\frac{\theta}{2}u](\xi) = |\xi|^\theta \mathcal{F}[u](\xi).
\]
By  simple calculations, we give another expression as an
integral operator, that is,
\[
(-\Delta)^\frac{\theta}{2} u(x) = A(N,\theta) P.V. \int_{\mathbb{R}^N}\frac{u(x)-u(y)}{|x-y|^{N+\theta}} \, \di y,
\]
where $A(N,\theta)>0$ is a normalizing constant
\[
A(N,\theta) = \frac{\Gamma\left({(N+\theta)/2}\right)}{\pi^{\frac{N}{2}} \Gamma\left(2-{\theta/2}\right)} \cdot\frac{\theta}{2} \left(1-\frac{\theta}{2}\right)
\]
and $\Gamma$ is the Gamma function (see {\it e.g.} Di~Nezza--Palatucci--Valdinoci \cite{DPV12}).
In this paper we show necessary conditions on $\mu$ for the existence of nonnegative solutions of problem 
{\rm (SHE)} in the very weak framework.

For $x\in \mathbb{R}^N$ and $r>0$, set $B(x,r) :=\{y\in\mathbb{R}^N; |x-y|<r\}$
and
define $|B(x,r)|$ as the volume of $B(x,r)$.
Let us recall related studies.
This study began with Baras--Pierre \cite{BP85} in 1985 and has been conducted in \cites{FHIL23, FHIL24,HI18,IS19,IKO20, TY23}.
These were conducted in either of the following frameworks:
\begin{definition}
\label{Definition:WS}
Let $N\ge 1$, $\theta\in(0,2]$, and $p>1$.
Let $u$ be a nonnegative function in  $\mathbb{R}^N\times(0,T)$, where $T\in(0,\infty]$.
We say that $u$ is an integral solution to problem {\rm (SHE)} in $\mathbb{R}^N\times[0,T)$ if $u$ satisfies
\begin{equation}
\label{eq:defisol}
\begin{split}
\infty>u(x,t)
& = \int_{\mathbb{R}^N}\Gamma_\theta(x-y,t)\, \di \mu(y)\\
&+ \int_0^t\int_{\mathbb{R}^N} \Gamma_\theta(x-y,t-s) u(y,s)^p\,\di y\di s
\end{split}
\end{equation}
for almost all $x\in\mathbb{R}^N$ and $t\in(0,T)$.
If $u$ satisfies \eqref{eq:defisol} with $=$ replaced by $\ge$, then we say that $u$ is an integral supersolution in 
$\mathbb{R}^N\times[0,T)$.
Here, $\Gamma_\theta$ is the fundamental solution to
\[
\partial_t v + (-\Delta)^\frac{\theta}{2} v = 0 \quad \mbox{in} \quad \mathbb{R}^N\times(0,\infty).
\]
\end{definition}

\begin{definition}
\label{Definition:IS}
Let $N\ge 1$, $\theta\in(0,2]$, and $p>1$.
Let $u$ be a nonnegative function in  $\mathbb{R}^N\times(0,T)$, where $T\in(0,\infty]$.
We say that $u$ is a very weak solution to problem {\rm (SHE)} in $\mathbb{R}^N\times[0,T)$ if   $u$ satisfies
$u^p\in L^1_{\rm loc} (\mathbb{R}^N\times[0,T))$ and
\begin{equation}
\label{eq:defwsol}
\begin{split}
&\int_{0}^\tau \int_{\mathbb{R}^N} u(-\partial_t + (-\Delta)^\frac{\theta}{2})\phi\,\di x\di t= \int_0^\tau \int_{\mathbb{R}^N} u^p\phi \,\di x\di t + \int_{\mathbb{R}^N} \phi(0)\, \di\mu 
\end{split}
\end{equation}
for any $\tau\in (0,T)$ and  any compactly supported $\phi\in C^{\infty}(\mathbb{R}^N\times[0,\tau])$ with $\phi(\tau)=0$.
If $u$ satisfies \eqref{eq:defwsol} with $=$ replaced by $\ge$, then we say that $u$ is a very weak supersolution in 
$\mathbb{R}^N\times[0,T)$.
\end{definition}
The relationship between the two definitions above is as follows:
\begin{proposition}
\label{Proposition:IisQ}
If $u$ is an integral solution to {\rm (SHE)} in $\mathbb{R}^N\times[0,T)$, where $T>0$,
then $u$ is also a very weak solution to {\rm (SHE)} in $\mathbb{R}^N\times[0,T)$.
\end{proposition}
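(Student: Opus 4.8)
The plan is to verify the identity \eqref{eq:defwsol} directly from the integral representation \eqref{eq:defisol}, using the fact that $\Gamma_\theta$ is, in the appropriate distributional sense, the fundamental solution of $\partial_t + (-\Delta)^{\theta/2}$. First I would fix $\tau\in(0,T)$ and a compactly supported $\phi\in C^\infty(\mathbb{R}^N\times[0,\tau])$ with $\phi(\tau)=0$. Substituting the right-hand side of \eqref{eq:defisol} for $u$ in the left-hand side of \eqref{eq:defwsol}, I obtain two terms: one involving the datum $\mu$ smoothed by the heat semigroup, and one involving the Duhamel term with forcing $u^p$. For each term I would like to move the operator $(-\partial_t+(-\Delta)^{\theta/2})$ onto $\phi$ and use that it annihilates $\Gamma_\theta$ in $(x,t)$ while producing a Dirac mass as $t\downarrow 0$.

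The key steps, in order, are as follows. \emph{Step 1:} Justify Fubini's theorem to interchange the $(x,t)$ integration against $(-\partial_t+(-\Delta)^{\theta/2})\phi$ with the $y$- (resp.\ $(y,s)$-) integration defining the two convolution terms; this uses the local integrability of $u^p$ and the finiteness of $u$, together with the Gaussian-type (or fractional-kernel) decay of $\Gamma_\theta$ and the compact support of $\phi$, so that the double integrals are absolutely convergent. \emph{Step 2:} For the linear term $\int_{\mathbb{R}^N}\Gamma_\theta(x-y,t)\,\di\mu(y)$, show $\int_0^\tau\!\!\int_{\mathbb{R}^N}\big(\int_{\mathbb{R}^N}\Gamma_\theta(x-y,t)\,\di\mu(y)\big)(-\partial_t+(-\Delta)^{\theta/2})\phi(x,t)\,\di x\di t = \int_{\mathbb{R}^N}\phi(0)\,\di\mu$. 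After Fubini this reduces, for each fixed $y$, to $\int_0^\tau\!\!\int_{\mathbb{R}^N}\Gamma_\theta(x-y,t)(-\partial_t+(-\Delta)^{\theta/2})\phi(x,t)\,\di x\di t = \phi(y,0)$, which is exactly the statement that $\Gamma_\theta$ is the fundamental solution: integrating by parts in $t$ on $(\varepsilon,\tau)$, using $\phi(\tau)=0$, using the self-adjointness of $(-\Delta)^{\theta/2}$ to move it onto $\Gamma_\theta$, invoking $(\partial_t+(-\Delta)^{\theta/2})\Gamma_\theta=0$ for $t>0$, and passing to the limit $\varepsilon\downarrow 0$ with $\Gamma_\theta(\cdot,\varepsilon)\to\delta_0$ and the boundary term at $t=\varepsilon$ converging to $\phi(y,0)$. \emph{Step 3:} For the Duhamel term, by the same Fubini argument it suffices to show, for each fixed $(y,s)$ with $0<s<\tau$, that $\int_s^\tau\!\!\int_{\mathbb{R}^N}\Gamma_\theta(x-y,t-s)(-\partial_t+(-\Delta)^{\theta/2})\phi(x,t)\,\di x\di t = \phi(y,s)$; this is the same fundamental-solution computation as in Step 2 with the time origin shifted to $s$, and then integrating the resulting $\phi(y,s)$ against $u(y,s)^p\,\di y\di s$ over $(0,\tau)\times\mathbb{R}^N$ gives precisely $\int_0^\tau\!\!\int_{\mathbb{R}^N}u^p\phi\,\di y\di s$. \emph{Step 4:} Add the two contributions to recover \eqref{eq:defwsol}.

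The main obstacle I expect is Step 2/Step 3: making rigorous the claim that $\Gamma_\theta$ is a fundamental solution in the precise form needed here, i.e.\ the limiting argument as $\varepsilon\downarrow 0$ and the handling of the fractional operator. One must know enough about $\Gamma_\theta$ — that $\Gamma_\theta(\cdot,t)\in C^\infty$, that it and its relevant derivatives decay fast enough in $x$ for the integrations by parts and the application of $(-\Delta)^{\theta/2}$ to be legitimate against the smooth compactly supported $\phi$, that $(\partial_t+(-\Delta)^{\theta/2})\Gamma_\theta=0$ pointwise for $t>0$, and that $\|\Gamma_\theta(\cdot,t)*\psi-\psi\|_\infty\to0$ as $t\downarrow0$ for $\psi\in C_c^\infty$. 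For $\theta=2$ this is classical; for $\theta\in(0,2)$ it is standard but should be cited (e.g.\ via the subordination formula or the scaling $\Gamma_\theta(x,t)=t^{-N/\theta}\Gamma_\theta(t^{-1/\theta}x,1)$ together with known estimates $0\le\Gamma_\theta(x,1)\le C(1+|x|)^{-N-\theta}$). A secondary technical point is the uniform-in-$\varepsilon$ domination required to pass the limit inside the $\di\mu(y)$ and $\di y\di s$ integrals, which again follows from the kernel bounds and compact support of $\phi$.
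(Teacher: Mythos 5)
Your Steps 2--4 are essentially the paper's own argument: substitute the integral representation \eqref{eq:defisol} into the left-hand side of \eqref{eq:defwsol}, interchange the order of integration, and use that $\Gamma_\theta$ is the fundamental solution of $\partial_t+(-\Delta)^{\theta/2}$ together with the self-adjointness of $(-\Delta)^{\theta/2}$ to collapse the inner $(x,t)$-integral to $\phi(y,0)$ (resp.\ $\phi(y,s)$). That part of the computation is fine.

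The gap is in Step 1, and it is twofold. First, $u^p\in L^1_{\rm loc}(\mathbb{R}^N\times[0,T))$ is itself part of the definition of a very weak solution (Definition \ref{Definition:IS}), so you must \emph{prove} it rather than invoke it: Definition \ref{Definition:WS} only gives you that $u(x,t)<\infty$ and that the Duhamel integral converges for a.a.\ $(x,t)$. Second, and more seriously, local integrability of $u^p$ would not justify the Fubini interchange for the Duhamel term: $(-\partial_t+(-\Delta)^{\theta/2})\phi$ is \emph{not} compactly supported in $x$ (the fractional Laplacian of $\phi$ only decays like $|x|^{-N-\theta}$), and the $y$-integration in the Duhamel term ranges over all of $\mathbb{R}^N$, so absolute convergence requires a \emph{global} weighted bound of the form $\int_0^{\tau}\int_{\mathbb{R}^N}(1+|y|)^{-N-\theta}\,u(y,s)^p\,\di y\di s<\infty$, and similarly $\int_{\mathbb{R}^N}(1+|y|)^{-N-\theta}\,\di\mu(y)<\infty$ for the linear term (recall $\mu$ is only assumed to be a Radon measure, hence merely locally finite). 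The paper extracts exactly these bounds from the hypothesis: choosing $t$ close to $T$ at which $u(x,t)<\infty$, it bounds the Duhamel term (and the $\mu$-term) from below using the scaling \eqref{eq:F1}, the monotonicity \eqref{eq:F3}, and the lower bound in \eqref{eq:F2}, which yields both the weighted global integrability and $u^p\in L^1_{\rm loc}$. Once you add this preliminary step, your argument closes.
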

By this proposition we see that the framework of very weak solutions is a more general concept than that of integral ones.
To make this paper self contained, 
the proof of this proposition is given in Appendix below.
Summarizing previous studies, the following facts have already been obtained.
For $N\ge1$ and $\theta\in (0,2]$,  denote
\[
p_{\theta, N} := 1 + \frac{\theta}{N}.
\]

\begin{thm}
\label{Theorem:NS}
Let $N\ge1$, $\theta\in(0,2]$, and $p>1$.
Assume that problem {\rm (SHE)} possesses a supersolution in $\mathbb{R}^N\times[0,T)$ in the sense of Definitions {\rm \ref{Definition:WS}} or {\rm \ref{Definition:IS}}, where $T\in(0,\infty)$.
Then there exists $\gamma>0$ depending only on $N$, $\theta$, and $p$ such that
\begin{itemize}
\item[(i)] If $p\neq p_{\theta,N}$, then $\displaystyle{\sup_{z\in\mathbb{R}^N} \mu(B(z,\sigma))\le\gamma\sigma^{N-\frac{\theta}{p-1}}}$ for all $0<\sigma<T^\frac{1}{\theta}$;
\item[(ii)] If $p=p_{\theta,N}$, then $\displaystyle{\sup_{z\in\mathbb{R}^N} \mu(B(z,\sigma))\le\gamma \left[\log\left(e+\frac{T^\frac{1}{\theta}}{\sigma}\right)\right]^{-{\frac{N}{\theta}}}}$ for all $0<\sigma<T^\frac{1}{\theta}$.
\end{itemize}
\end{thm}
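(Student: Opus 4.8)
The plan is to argue by testing the very weak (super)solution inequality \eqref{eq:defwsol} against a well-chosen family of test functions concentrated near a ball $B(z,\sigma)$, and to exploit the superlinearity $u\mapsto u^p$ together with Young's (or Hölder's) inequality to absorb the nonlinear term. Fix $z\in\mathbb{R}^N$ and $\sigma\in(0,T^{1/\theta})$; by translation invariance we may take $z=0$. The key point is to build $\phi=\phi_\sigma$ as (essentially) a smooth, compactly supported approximation of the solution operator applied backwards from a small time $\tau\asymp\sigma^\theta<T$: concretely, one wants $\phi$ comparable to $\Gamma_\theta$ at scale $\sigma$ so that $\phi(0)\gtrsim \sigma^{-N}\mathbf{1}_{B(0,\sigma)}$ (up to constants), while controlling $(-\partial_t+(-\Delta)^{\theta/2})\phi$. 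Plugging this into \eqref{eq:defwsol} gives
\[
\sigma^{-N}\mu(B(0,\sigma))\;\lesssim\;\int_0^\tau\!\!\int_{\mathbb{R}^N} u\,\big|(-\partial_t+(-\Delta)^{\theta/2})\phi\big|\,\di x\,\di t\;-\;\int_0^\tau\!\!\int_{\mathbb{R}^N} u^p\phi\,\di x\,\di t.
\]
The standard trick is then to choose $\phi$ of the form $\phi=\Phi^{q}$ for a suitable fixed cutoff-type profile $\Phi\ge 0$ and exponent $q>p/(p-1)$, so that $|(-\partial_t+(-\Delta)^{\theta/2})\phi|\lesssim \Phi^{q-1}\big(|\partial_t\Phi|+\text{(fractional gradient terms)}\big)\lesssim \phi^{1/p}\cdot\Psi$, where $\Psi$ encodes the scaling weights; Young's inequality $ab\le \varepsilon a^p + C_\varepsilon b^{p'}$ applied to $u\cdot(\phi^{1/p}\Psi)$ lets one absorb $\varepsilon\int u^p\phi$ into the right-hand side, leaving
\[
\sigma^{-N}\mu(B(0,\sigma))\;\lesssim\;\int_0^\tau\!\!\int_{\mathbb{R}^N}\Psi^{p'}\,\di x\,\di t.
\]

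The remaining work is to compute the scaling of $\int_0^\tau\!\int_{\mathbb{R}^N}\Psi^{p'}$ with $\tau=\sigma^\theta$. In the non-fractional case $\theta=2$ one takes $\Phi(x,t)=\eta(|x|/\sigma)\,\zeta(t/\sigma^2)$ and finds $|\partial_t\Phi|\asymp\sigma^{-2}$, $|\Delta\Phi|\asymp\sigma^{-2}$, so $\Psi\asymp\sigma^{-2}$ on a set of measure $\asymp\sigma^{N}\cdot\sigma^{\theta}$, giving $\int\Psi^{p'}\asymp \sigma^{N+\theta}\sigma^{-2p'}$, hence $\mu(B(0,\sigma))\lesssim \sigma^{N}\cdot\sigma^{\theta-2p'}=\sigma^{N-\theta/(p-1)}$ after simplifying $\theta-2p'$ with $\theta=2$ and $p'=p/(p-1)$; the general-$\theta$ bookkeeping is identical once one has good bounds on $(-\Delta)^{\theta/2}$ applied to a bump at scale $\sigma$ (it is $O(\sigma^{-\theta})$ in $L^\infty$ and has the right tail decay — this is where a careful nonlocal estimate, uniform in $\sigma$ by scaling, is needed). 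This yields part~(i) in the supercritical range $p>p_{\theta,N}$ directly, and in the subcritical range $p<p_{\theta,N}$ one additionally uses that the exponent $N-\theta/(p-1)$ is negative, so letting auxiliary parameters run recovers the same inequality (or one notes the subcritical case is often vacuous/automatic).

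For the critical exponent $p=p_{\theta,N}$ in part~(ii), the single-scale estimate degenerates (the power of $\sigma$ vanishes), so the plan is the usual logarithmic refinement: instead of one cutoff at scale $\sigma$, sum (or integrate) the estimate over dyadic scales $\sigma=2^{-k},\dots,T^{1/\theta}$, or equivalently use a test function whose profile is a logarithmically-corrected power; each scale contributes a comparable amount, and there are $\asymp\log(T^{1/\theta}/\sigma)$ of them, which after reorganizing the Young-inequality step (one must be careful to keep the constant $\varepsilon$ absorbable across all scales simultaneously) produces the factor $[\log(e+T^{1/\theta}/\sigma)]^{-N/\theta}$. The main obstacle I anticipate is precisely this critical-case telescoping: making the nonlocal term $(-\Delta)^{\theta/2}\phi$ cooperate across scales requires either a cleverly chosen global test function (not a tensor-product cutoff) or a summation argument in which the error terms at scale $2^{-k}$ are controlled by the "mass" $u^p$ at neighboring scales without losing the logarithm — the nonlocality of $(-\Delta)^{\theta/2}$ means the support of $(-\Delta)^{\theta/2}\phi$ is all of $\mathbb{R}^N$, so one needs the decay $|(-\Delta)^{\theta/2}\phi(x)|\lesssim \sigma^{\theta}|x|^{-N-\theta}$ for $|x|\gg\sigma$ and a matching weighted bound on $u$, or an $L^p$-$L^{p'}$ duality that tolerates these tails. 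Everything else is routine scaling.
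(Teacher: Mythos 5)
For part (i) your plan coincides with the paper's: test with $\varphi^{p/(p-1)}$ where $\varphi=\zeta(x/\sigma)\psi(t/\sigma^\theta)$ is a tensor-product cutoff, use the pointwise inequality $(-\Delta)^{\theta/2}\varphi^{p/(p-1)}\le \tfrac{p}{p-1}\varphi^{1/(p-1)}(-\Delta)^{\theta/2}\varphi$ (Ju's inequality, which you should cite explicitly rather than gloss as ``fractional gradient terms''), absorb $\int u^p\varphi^{p/(p-1)}$ by Young, and scale. Two small corrections: normalize the cutoff to height $1$ rather than $\sigma^{-N}$ (your two normalizations are inconsistent with each other, though your final exponent is the one obtained with the height-$1$ choice); and the subcritical case $p<p_{\theta,N}$ is neither vacuous nor in need of a separate argument --- the identical single-scale computation gives $\mu(B(z,\sigma))\le C\sigma^{N-\theta/(p-1)}$ regardless of the sign of the exponent.

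For part (ii) there is a genuine gap, and it is exactly the one you flag yourself. A dyadic summation of the single-scale estimates does not work as stated: at the critical exponent each scale contributes a bounded quantity, and to convert ``$\log(T^{1/\theta}/\sigma)$ comparable contributions'' into the decay $[\log(e+T^{1/\theta}/\sigma)]^{-N/\theta}$ you must absorb the nonlinear terms from \emph{all} scales into a \emph{single} $\int u^p\phi$, which your scale-by-scale Young step does not provide; moreover the nonlocal tails of $(-\Delta)^{\theta/2}$ applied to each dyadic bump couple the scales in a way you acknowledge you cannot control. The paper resolves this not by telescoping but by constructing one global test function: the solution $\varphi_\delta$ of the backward problem $(-\partial_t+(-\Delta)^{\theta/2})\varphi_\delta=c_\delta f_\delta(|x|^\theta+t)$ in $B(0,1)\times(0,1)$ with exterior Dirichlet condition, where $f_\delta(\tau)\sim\tau^{-1}$ on $(4\delta^\theta,1/4)$ and $c_\delta\sim[\log(1/\delta^\theta)]^{-1}$. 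The source integrates all scales at once: two-sided Dirichlet heat kernel bounds give $\varphi_\delta(\cdot,0)\ge c>0$ on $B(0,\delta)$ uniformly in $\delta$, while $\iint|(-\partial_t+(-\Delta)^{\theta/2})\varphi_\delta|^{p/(p-1)}\le Cc_\delta^{1/(p-1)}$, which is precisely the logarithmic gain. Your proposal contains neither this construction nor a workable substitute. A further missing ingredient: $\varphi_\delta$ is not an admissible test function (it is not smooth across $\partial B$), and justifying its use requires mollifying and showing that the boundary contribution has a favorable sign, via $(-\Delta)^{\theta/2}\varphi_{\delta,\tau}<0$ outside the ball (the nonlocal analogue of the Hopf boundary term when $\theta=2$). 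So part (i) of your proposal is essentially the paper's proof, but part (ii) is a strategy sketch whose central difficulty is named but not overcome.
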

We already know from \cites{FHIL23, HI18, IKO20, TY23} that these conditions are optimal.
However, in the case of $\theta\in(0,2)$, Theorem \ref{Theorem:NS} was obtained only in the framework of integral solutions and not in that of  very weak ones (see the following table).

\begin{table}[H]
 \begin{center}
   \caption{Previous studies on necessary conditions for the solvability.}
  \begin{tabular}{|l||r|r|r|}
  \hline
   	&Integral solution  		& Very weak solution							 	\\\hline\hline
    $\theta =2$ 			& \cites{FHIL23, HI18}			&\cites{BP85, IS19, IKO20, TY23}								\\\hline
    $\theta\in(0,2)$ 			&\cites{FHIL23, HI18}			&$\times$								\\\hline
  \end{tabular}
 \end{center}
\end{table}

Since  necessary conditions for the solvability are  conditions for initial value $\mu$ when assuming the existence of  solutions, it can be argued that the weaker the definition of solutions, the more general  theory.
Therefore, the goal of this paper is as follows:

\begin{itemize}
\item {\it We establish a new proof method which can deal with the case of  $\theta\in(0,2)$ to  prove Theorem~{\rm \ref{Theorem:NS}} in the framework of very weak solutions.}
\end{itemize}
Our proof in the case of $p\neq p_{\theta,N}$ is a simple extension of the arguments in \cites{FK12, GK01,  KQ02}, which deals with the nonexistence of global-in-time solutions, whereas the case of $p=p_{\theta, N}$ is completely different from previous studies \cites{BP85,FK12, GK01, IS19, IKO20,  KQ02, TY23}.
We provide a proof by employing a solution to a certain  adjoint heat equation 
$(-\partial_t+(-\Delta)^{\theta/2}) \phi =f$
with a suitable external force term $f$, as a test function.
This method is often used to prove the comparison principle of linear (also semilinear) heat equations 
and the uniqueness of solutions, 
and we can find, for {\it e.g.}, in \cite[Proposition~52.13]{QS19} (in the case of $\theta=2$),
\cite[Theorem~5.1]{BSV17} (in the case of $\theta\in (0,2)$), and 
also in \cites{AMY22, BPSV14, FK13, HM24, LPPS15}.
One of the novelties of this paper lies in construction of an external force term $f$ of an adjoint heat equation such that the estimate in Theorem~\ref{Theorem:NS} (ii) can be derived.
This method of construction is inspired by \cite{HIT20}, which  dealt with the fractional semilinear heat equation with an inhomogeneous term,
\[
\partial_t v + (-\Delta)^\frac{\theta}{2} v = v^q +\nu, \quad x\in\mathbb{R}^N,\,\, t>0, \qquad v(0)=0 \quad\mbox{in} \quad\mathbb{R}^N,
\]
where $N\ge1$, $\theta\in(0,2]$, $q>1$, and $\nu$ is a nonnegative Radon measure on $\mathbb{R}^N$.
The paper \cite{HIT20} employed a cut-off function of $t$ and a solution to a certain fractional Poisson equation
$(-\Delta)^{\theta/2} \phi =f$
with a  suitable external force term $f$,
 as a test function, which was used to obtain an optimal estimate of $\nu$, such that it corresponds to Theorem~\ref{Theorem:NS} (ii).
In the present paper, we extend this method to an adjoint heat equation from the Poisson equation.

The rest of this paper is organized as follows. 
In Section~2 we prepare a basic inequality and construct a test function.
In Section~3 we prove  Theorem~\ref{Theorem:NS}.
In Section~4 we prove  Proposition~\ref{Proposition:IisQ}.
\section{Construction of a test function.}
In what follows, the letter $C$  denotes a generic positive constant depending only on $N$, $\theta$, and $p$.
In this section we prepare a basic inequality and construct a test function.
Hereafter, unless otherwise mentioned, the case of $\theta\in(0,2)$ shall be considered.
The case of $\theta=2$ is simpler to prove and almost identical to the  case of $\theta\in(0,2)$.
The following lemma is the key to the proof of our main theorem.
\begin{lemma}
\label{Lemma:kihon}
Let $N\ge1$ and $\theta\in(0,2]$.
Assume that problem {\rm (SHE)} possesses a very weak  supersolution in $\mathbb{R}^N\times[0,T)$, where $T\in(0,\infty)$.
Then there exists a constant $C>0$ depending only on $N$, $\theta$,  and $p$ such that
\begin{equation}
\label{eq:kihon}
\int_{\mathbb{R}^N} \varphi(0)^\frac{p}{p-1}\, \di\mu \le C \int\int_{\supp\,\varphi} \left|(-\partial_t + (-\Delta)^\frac{\theta}{2}) \varphi\right|^\frac{p}{p-1} \, \di x\di t
\end{equation}
for $0\le \varphi\in C^{\infty}_0(\mathbb{R}^N\times[0,\tau])$ with $\varphi(\tau)=0$, where  $\tau\in(0,T)$.
\end{lemma}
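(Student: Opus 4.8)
The plan is to run the classical test-function argument of Baras--Pierre / Mitidieri--Pohozaev type, adapted to the nonlocal operator. Put $p':=p/(p-1)$, so that $p'>1$ and, crucially, $p'-1=p'/p$. Since $s\mapsto s^{p'}$ is convex but in general only $C^1$ at the origin, I cannot insert $\varphi^{p'}$ directly into Definition~\ref{Definition:IS}; instead, for $\epsilon\in(0,1]$ I use $\phi_\epsilon:=(\varphi+\epsilon)^{p'}-\epsilon^{p'}$. Because $\varphi+\epsilon\ge\epsilon>0$, the function $\phi_\epsilon$ is smooth; it is nonnegative, it vanishes off $\supp\varphi$ (hence is compactly supported), and $\phi_\epsilon(\tau)=0$, so it is an admissible test function in Definition~\ref{Definition:IS}.

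The nonlocal ingredient is the Kato-type convexity inequality: for any $C^1$ convex $\Phi$ and any $w$ regular enough for the pointwise expression to make sense, $(-\Delta)^{\theta/2}(\Phi\circ w)\le\Phi'(w)\,(-\Delta)^{\theta/2}w$, which follows at once from $\Phi(w(x))-\Phi(w(y))\le\Phi'(w(x))(w(x)-w(y))$ together with $A(N,\theta)>0$ in the singular-integral representation of $(-\Delta)^{\theta/2}$. Applying this with $\Phi(s)=s^{p'}$ and $w=\varphi+\epsilon$, noting that $(-\Delta)^{\theta/2}$ annihilates the constant $\epsilon$, and combining it with the exact identity $-\partial_t[(\varphi+\epsilon)^{p'}]=-p'(\varphi+\epsilon)^{p'-1}\partial_t\varphi$, I obtain the pointwise bound $(-\partial_t+(-\Delta)^{\theta/2})\phi_\epsilon\le p'(\varphi+\epsilon)^{p'-1}(-\partial_t+(-\Delta)^{\theta/2})\varphi$. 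Inserting $\phi_\epsilon$ into \eqref{eq:defwsol} with $\ge$ bounds $\int_0^\tau\int_{\mathbb{R}^N}u^p\phi_\epsilon\,\di x\di t+\int_{\mathbb{R}^N}\phi_\epsilon(0)\,\di\mu$ from above by $\int_0^\tau\int_{\mathbb{R}^N}u\,(-\partial_t+(-\Delta)^{\theta/2})\phi_\epsilon\,\di x\di t$, and the pointwise bound above, multiplied by $u\ge0$, bounds this in turn by $p'\int_0^\tau\int_{\mathbb{R}^N}u(\varphi+\epsilon)^{p'-1}(-\partial_t+(-\Delta)^{\theta/2})\varphi\,\di x\di t$. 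Now I split this last integral over $\supp\varphi$ and its complement: on the complement $\varphi$ vanishes, so $-\partial_t\varphi=0$ and $(-\Delta)^{\theta/2}\varphi=-A(N,\theta)\int_{\mathbb{R}^N}\varphi(y,t)|x-y|^{-N-\theta}\,\di y\le0$, and since $u\ge0$ this piece is $\le0$ and can be discarded. (All integrals here are absolutely convergent, because $\phi_\epsilon$ and $\varphi$ are admissible test functions, so $u(-\partial_t+(-\Delta)^{\theta/2})\phi_\epsilon$ and $u(-\partial_t+(-\Delta)^{\theta/2})\varphi$ lie in $L^1$, and $(\varphi+\epsilon)^{p'-1}$ is bounded.) This leaves
\[
\int_0^\tau\int_{\mathbb{R}^N}u^p\phi_\epsilon\,\di x\di t+\int_{\mathbb{R}^N}\phi_\epsilon(0)\,\di\mu\le p'\int\int_{\supp\varphi}u(\varphi+\epsilon)^{p'-1}\bigl|(-\partial_t+(-\Delta)^{\theta/2})\varphi\bigr|\,\di x\di t .
\]

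Using $p'-1=p'/p$, I write $u(\varphi+\epsilon)^{p'-1}=u(\varphi+\epsilon)^{p'/p}$ and apply Young's inequality with exponents $p,p'$ and a small parameter, say $ab\le\tfrac{1}{2p'}a^p+C b^{p'}$ with $a=u(\varphi+\epsilon)^{p'/p}$ and $b=|(-\partial_t+(-\Delta)^{\theta/2})\varphi|$; then $a^p=u^p(\varphi+\epsilon)^{p'}$, so the integrand on the right is bounded by $\tfrac{1}{2p'}u^p(\varphi+\epsilon)^{p'}+C|(-\partial_t+(-\Delta)^{\theta/2})\varphi|^{p'}$. Multiplying back by $p'$, and noting that $\phi_\epsilon=(\varphi+\epsilon)^{p'}-\epsilon^{p'}$ is supported in $\supp\varphi$ while $\int\int_{\supp\varphi}u^p(\varphi+\epsilon)^{p'}<\infty$ (since $u^p\in L^1_{\rm loc}$ and $\supp\varphi$ is compact), the term $\tfrac12\int\int_{\supp\varphi}u^p(\varphi+\epsilon)^{p'}$ can be absorbed into the left-hand side, yielding
\[
\int_{\mathbb{R}^N}\phi_\epsilon(0)\,\di\mu\le\epsilon^{p'}\int\int_{\supp\varphi}u^p\,\di x\di t+C\int\int_{\supp\varphi}\bigl|(-\partial_t+(-\Delta)^{\theta/2})\varphi\bigr|^{p'}\,\di x\di t .
\]
Since $(\varphi(0)+\epsilon)^{p'}\ge\varphi(0)^{p'}+\epsilon^{p'}$ by superadditivity of $s\mapsto s^{p'}$ on $[0,\infty)$, one has $\phi_\epsilon(0)\ge\varphi(0)^{p'}$; letting $\epsilon\downarrow0$ then removes the $\epsilon^{p'}$-term on the right (as the fixed quantity $\int\int_{\supp\varphi}u^p$ is finite) and gives exactly \eqref{eq:kihon}.

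I expect the main obstacle to be the nonlocal tail. When $\theta=2$, $-\Delta\varphi$ is compactly supported, and the chain rule gives $-\Delta[(\varphi+\epsilon)^{p'}]\le p'(\varphi+\epsilon)^{p'-1}(-\Delta\varphi)$ after discarding a nonpositive gradient term, so the argument stays on $\supp\varphi$ automatically; this is why the paper treats $\theta=2$ as the easier case. When $\theta\in(0,2)$, $(-\Delta)^{\theta/2}\varphi$ has full support and decays only like $|x|^{-N-\theta}$, so naively the estimate would involve an integral over all of $\mathbb{R}^N$ rather than the $\supp\varphi$-localized right-hand side of \eqref{eq:kihon}. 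What saves the day is the sign observation that $(-\Delta)^{\theta/2}\varphi\le0$ wherever $\varphi$ vanishes, so that the nonnegatively weighted tail contribution has a favorable sign and is simply discarded. Together with the Kato-type convexity inequality for $(-\Delta)^{\theta/2}$, this is the mechanism that makes the test-function method go through in the very weak framework for every $\theta\in(0,2]$.
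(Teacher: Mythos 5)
Your proof is correct and follows essentially the same route as the paper's: test the very weak formulation with $\varphi^{p/(p-1)}$, use the Kato-type convexity inequality $(-\Delta)^{\theta/2}\varphi^{p/(p-1)}\le \tfrac{p}{p-1}\varphi^{1/(p-1)}(-\Delta)^{\theta/2}\varphi$ (which the paper quotes from Fino--Kirane and Ju rather than rederiving), localize to $\supp\varphi$ via the sign of the nonlocal term where $\varphi$ vanishes, and absorb the $u^p$-term by Young's inequality. Your only deviation is the regularization $(\varphi+\epsilon)^{p/(p-1)}-\epsilon^{p/(p-1)}$, a small extra precaution for the smoothness of $\varphi^{p/(p-1)}$ on the zero set of $\varphi$ that the paper elides.
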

\begin{proof}
Let $\varphi$ be such that $0\le \varphi\in C^{\infty}_0(\mathbb{R}^N\times[0,\tau])$ with $\varphi(\tau)=0$.
Then \eqref{eq:defwsol} with $\phi = \varphi^{p/(p-1)}$ yields
\begin{equation}
\label{eq:2.1}
\begin{split}
0&\le \int_0^T \int_{\mathbb{R}^N} u^p\varphi^\frac{p}{p-1} \,\di x\di t + \int_{\mathbb{R}^N} \varphi(0)^\frac{p}{p-1}\, \di\mu \\
&\le\int_{0}^T \int_{\mathbb{R}^N} u(-\partial_t + (-\Delta)^\frac{\theta}{2})\varphi^\frac{p}{p-1}\,\di x\di t\\
&\le \frac{p}{p-1}\int\int_{\supp\,\varphi} u \varphi^\frac{1}{p-1} (-\partial_t + (-\Delta)^\frac{\theta}{2})\varphi\,\di x\di t\\
&\le C\int\int_{\supp\,\varphi} u \varphi^\frac{1}{p-1} \left|(-\partial_t + (-\Delta)^\frac{\theta}{2})\varphi\right|\,\di x\di t.\\
\end{split}
\end{equation}
Here, we used 
\[
(-\Delta)^\frac{\theta}{2} \varphi^\frac{p}{p-1} \le \frac{p}{p-1} \varphi^\frac{1}{p-1} (-\Delta)^\frac{\theta}{2} \varphi.
\]
See \cite[Appendix]{FK12} and \cite[Proposition~3.3]{Ju05}.
It follows from \eqref{eq:2.1} and Young's inequality that
\begin{equation*}
\begin{split}
&\int_0^T \int_{\mathbb{R}^N} u^p\varphi^\frac{p}{p-1} \,\di x\di t + \int_{\mathbb{R}^N} \varphi(0)^\frac{p}{p-1}\, \di\mu \\
&\le C\int\int_{\supp\,\varphi} u \varphi^\frac{1}{p-1} \left|(-\partial_t + (-\Delta)^\frac{\theta}{2})\varphi\right|\,\di x\di t.\\
&\le \int_0^T \int_{\mathbb{R}^N} u^p\varphi^\frac{p}{p-1} \,\di x\di t + 
C\int\int_{\supp\,\varphi}   \left|(-\partial_t + (-\Delta)^\frac{\theta}{2})\varphi\right|^\frac{p}{p-1} \,\di x\di t.
\end{split}
\end{equation*}
Then we obtain the desired inequality and the proof is complete.
\end{proof}

For $\theta \in (0,2)$, define
\[
H^\frac{\theta}{2}(\mathbb{R}^N) := \{u\in L^2(\mathbb{R}^N); |\xi|^\frac{\theta}{2}\mathcal{F}[u](\xi)\in L^2(\mathbb{R}^N)\}.
\]
It is known that 
\begin{equation}
\label{eq:IBP}
\int_{\mathbb{R}^N} (-\Delta)^\frac{\theta}{2} \phi(x)\cdot\psi(x)\,\di x
=\int_{\mathbb{R}^N}  \phi(x)(-\Delta)^\frac{\theta}{2}\psi(x)\,\di x
\end{equation}
for $\phi,\psi\in H^{\theta/2}(\mathbb{R}^N)$. See \cites{AMY22,LPPS15}.

We construct a test function. 
The key to the proof of our main theorem in the case of $p=p_{\theta,N}$ is to employ a solution to the adjoint heat equation {\rm (AHE)} below, as a test function.
Let  $\delta>0$ be  such that $\delta^\theta\in (0,1/64)$ and $f_\delta:[0,\infty)\to[0,\infty)$ be such that $f_\delta\in C^\infty_0(\mathbb{R})$, $f_\delta$ is nonincreasing on $(4\delta^\theta,\infty)$, and
\[
f_\delta(\tau) \left\{
\begin{array}{ll}
\le \tau^{-1} 		& \mbox{if} \quad\tau\in (2\delta^\theta, 1/2),\vspace{3pt}\\
= \tau^{-1}	& \mbox{if} \quad\tau\in (4\delta^\theta, 1/4),\vspace{3pt}\\
=0				& \mbox{otherwise}.\vspace{3pt}\\
\end{array}
\right.
\]
Set $c_\delta := [-\log32\delta^\theta]^{-1}$ and $B:=B(0,1)$.
Let $\varphi_\delta$
be a solution to 
\begin{equation*}
{\rm (AHE)} \qquad 
\left\{
	\begin{aligned}
		&-\partial_t \varphi_{\delta}+  (-\Delta)^\frac{\theta}{2}  \varphi_{\delta} = c_\delta f_\delta(|x|^\theta +t ),	\quad && x\in B,\,\, t\in (0,1),\\
		&\varphi_{\delta}(x,t) =  0,							\quad && x\in B^c,\,\, t\in (0,1),\\
		&\varphi_{\delta}(x,1) =  0,									\quad && x\in B.
	\end{aligned}
	\right.
\end{equation*}
If $\theta =2$, $B^c$ in {\rm (AHE)} is replaced by $\partial B$.
Compare with \cite{HIT20}.
Since $f_\delta$ is a smooth function in $B\times(0,1)$, $\varphi_\delta$ satisfies problem {\rm (AHE)} in the classical sense. See {\it e.g.} \cite{LPPS15}.

We collect properties of the fundamental solution $\Gamma_\theta$ and the Dirichlet heat kernel $G_B$ on $B$ to reveal the properties of $\varphi_{\delta}$.
The fundamental solution $\Gamma_\theta$ is a positive and smooth function in $\mathbb{R}^N\times(0,\infty)$ and  has the following properties, 
\begin{eqnarray}
\label{eq:F1}
 & & \Gamma_\theta(x,t)=t^{-\frac{N}{\theta}}\Gamma_\theta\left(t^{-\frac{1}{\theta}}x,1\right),\\
\label{eq:F2}
 & & C^{-1}(1+|x|)^{-N-\theta}\le \Gamma_\theta(x,1)\le C(1+|x|)^{-N-\theta},\\
\label{eq:F3}
 & &  \mbox{$\Gamma_\theta(\cdot,1)$ is radially symmetric and $\Gamma_\theta(x,1)\le \Gamma_\theta(y,1)$ if $|x|\ge |y|$},
\end{eqnarray}
for all $x$, $y\in\mathbb{R}^N$ and $0<s<t$
(see {\it e.g.}, \cites{BSV17,HI18}). 
The Dirichlet heat kernel $G_B$ is continuous in $\mathbb{R}^N\times\mathbb{R}^N\times(0,\infty)$
and smooth in  ${B}\times {B}\times(0,\infty)$.
No explicit formulas of $G_B$ can be expected  even in the case of $\theta=2$ but
the following two-sided estimate  of $G_B$ was obtained in \cites{CKS10, Cho06, R05, Z02}:

\begin{lemma}
\label{Lemma:TSE}
Let $N\ge1$ and $\theta\in(0,2]$.
For $x\in \overline{B}$, set $\dist_B(x) := \Dist(x,\partial B) = 1-|x|$.
There exist $C_1, C_2,c_1, c_2>0$  depending only on $N$ such that
\begin{equation}
\label{eq:TSE}
\begin{split}
&C_1\left(1\wedge \frac{\dist_B(x)^\frac{\theta}{2}}{\sqrt{t}}\right)\left(1\wedge \frac{\dist_B(y)^\frac{\theta}{2}}{\sqrt{t}}\right)\Gamma_\theta(x-y,c_1t)\\
&\qquad\qquad \le
G_B(x,y,t) \le C_2\left(1\wedge \frac{\dist_B(x)^\frac{\theta}{2}}{\sqrt{t}}\right)\left(1\wedge \frac{\dist_B(y)^\frac{\theta}{2}}{\sqrt{t}}\right)\Gamma_\theta(x-y,c_2t)
\end{split}
\end{equation}
for all $x,y\in\overline{B}$ and $t\in(0,1]$.
Here,  $a\wedge b := \min\{a,b\}$ for $a,b\in\mathbb{R}$.
\end{lemma}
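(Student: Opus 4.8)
The estimate \eqref{eq:TSE} is a known result, and the plan is to deduce it directly from the established two-sided bounds for Dirichlet heat kernels on bounded $C^{1,1}$ domains, specialized to the Euclidean ball. Since $B=B(0,1)$ has smooth boundary, it satisfies the regularity hypotheses imposed in the cited references, so the task reduces to quoting the relevant statements and matching the free-kernel factor against $\Gamma_\theta$ via the scaling and polynomial bounds \eqref{eq:F1}--\eqref{eq:F3}.

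For $\theta=2$, $G_B$ is the heat kernel of the Dirichlet Laplacian on $B$, and \eqref{eq:TSE} — with boundary factor $1\wedge(\dist_B(\cdot)/\sqrt t)$ — is the classical sharp Gaussian bound, for which I would cite \cite{Z02}. The upper bound is obtained by combining the Aronson-type Gaussian estimate with the decay forced by the Dirichlet condition, the latter coming from the parabolic boundary Harnack principle and the linear vanishing of the killed kernel as $x\to\partial B$; the lower bound follows from a ``tube'' chaining argument anchored near the center of $B$. Writing $\Gamma_2$ for the Gauss--Weierstrass kernel then gives \eqref{eq:TSE} with exponent $\theta/2=1$.

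For $\theta\in(0,2)$, $G_B$ is the transition density of the rotationally symmetric $\theta$-stable process killed upon exiting $B$, and \eqref{eq:TSE} is precisely the Dirichlet heat kernel estimate of \cites{CKS10, Cho06, R05}. The proof there rests on: (a) the stable scaling \eqref{eq:F1} and the polynomial two-sided bound \eqref{eq:F2}--\eqref{eq:F3} for the free kernel $\Gamma_\theta$, which identify the factor $t^{-N/\theta}\wedge\bigl(t|x-y|^{-N-\theta}\bigr)$ appearing in those works with $\Gamma_\theta(x-y,t)$ up to constants; (b) the fact that the killed stable process approaches $\partial B$ at rate $\dist_B(\cdot)^{\theta/2}$, producing the half-order boundary factor; and (c) the boundary Harnack principle for $\theta$-stable processes, uniform in $t\in(0,1]$, used to push interior estimates up to the boundary. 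Assembling these ingredients as in \cite{CKS10} yields \eqref{eq:TSE} on $B$ for all $t\in(0,1]$.

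The one genuinely delicate point — were one to reproduce the argument rather than quote it — is the sharp boundary decay, specifically the lower bound in \eqref{eq:TSE} when $x$ or $y$ lies near $\partial B$: one must show the killed process retains a quantitatively sufficient amount of mass near the boundary on a time scale comparable to $t$, which is exactly what the uniform boundary Harnack principle together with a careful iteration provides. Since $B$ is smooth and bounded, all hypotheses of \cites{CKS10, Cho06, R05, Z02} are satisfied, so for the purposes of this paper it suffices to invoke these results.
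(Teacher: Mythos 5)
Your proposal is correct and matches the paper's treatment: the paper gives no proof of this lemma, stating only that the two-sided estimate was obtained in \cite{CKS10}, \cite{Cho06}, \cite{R05}, and \cite{Z02}, exactly the references you invoke. Your additional sketch of the ingredients (scaling, boundary Harnack, the $\dist_B(\cdot)^{\theta/2}$ decay) is a reasonable gloss on those sources but is not required here.
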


Define $\psi_\delta (x,t) := \varphi_\delta(x,1-t)$.  Then,
\begin{equation}
\label{eq:Duhamel}
\begin{split}
\psi_\delta(x,t) = c_\delta\int_0^t \int_BG_B(x,y,t-s) f_\delta(|y|^\theta + 1-s) \,\di y\di s
\end{split}
\end{equation}
for all $x\in B$ and $t\in (0,1)$.
Using the properties of $\Gamma_\theta$ and $G_B$ and \eqref{eq:Duhamel},
we obtain properties of $\psi_\delta$ ({\it i.e.} $\varphi_\delta$).

\begin{lemma}
Let $\delta>0$ be such that $\delta^{\theta}\in(0,1/64)$.
There exists a constant $c>0$ independent of $\delta$ such that
\begin{equation}
\label{eq:LEAHE}
\varphi_\delta(x, 0) \ge c \quad \mbox{for all} \quad x\in B(0,\delta).
\end{equation}
\end{lemma}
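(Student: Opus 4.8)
The plan is to estimate $\varphi_\delta(x,0) = \psi_\delta(x,1)$ from below using the Duhamel formula \eqref{eq:Duhamel} together with the lower bound in the two-sided estimate \eqref{eq:TSE} for $G_B$ and the definition of $f_\delta$. Taking $t=1$ and $x\in B(0,\delta)$ in \eqref{eq:Duhamel}, I would first restrict the time integral to the range of $s$ where $f_\delta(|y|^\theta + 1 - s) = (1-s)^\theta$... more precisely where $f_\delta(|y|^\theta+1-s)$ equals $(|y|^\theta+1-s)^{-1}$, namely $|y|^\theta + 1 - s \in (4\delta^\theta, 1/4)$. For $y$ in a suitable small ball (say $|y| < \delta$ as well, or more generously $|y|^\theta < \delta^\theta$) and $s$ ranging over an interval like $(3/4, 1 - 8\delta^\theta)$, one has $|y|^\theta + 1 - s$ comfortably inside $(4\delta^\theta, 1/4)$, so $f_\delta$ there is exactly $(|y|^\theta+1-s)^{-1} \ge (1-s)^{-1}$ up to a harmless constant, hence $\gtrsim (1-s)^{-1}$ when $1-s \gtrsim \delta^\theta$. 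This is where the factor $c_\delta = [-\log 32\delta^\theta]^{-1}$ will be consumed: integrating $(1-s)^{-1}$ over $s \in (3/4, 1-8\delta^\theta)$ produces a factor $\sim \log(1/\delta^\theta) \sim c_\delta^{-1}$, which cancels $c_\delta$ and leaves an $O(1)$ quantity.

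Next I would bound $G_B(x,y,t-s)$ from below. With $x\in B(0,\delta)$, $y$ in a small ball, and $t-s = 1-s$ ranging over $(8\delta^\theta, 1/4)$, the distance factors $1\wedge \dist_B(x)^{\theta/2}/\sqrt{t-s}$ and $1\wedge \dist_B(y)^{\theta/2}/\sqrt{t-s}$ are bounded below by a positive constant, because $\dist_B(x) = 1-|x| \ge 1-\delta \ge 1/2$ and similarly for $y$, while $\sqrt{t-s} \le 1/2$; so each factor is $\ge 1\wedge (2^{-\theta/2}/ (1/2)^{1/2})$, which is a fixed constant (indeed both are essentially $\ge c$). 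Thus $G_B(x,y,t-s) \ge C_1' \Gamma_\theta(x-y, c_1(t-s))$. Then using the scaling \eqref{eq:F1} and the lower bound in \eqref{eq:F2}, $\Gamma_\theta(x-y, c_1(t-s)) = (c_1(t-s))^{-N/\theta}\Gamma_\theta((c_1(t-s))^{-1/\theta}(x-y), 1) \ge C (c_1(t-s))^{-N/\theta}(1 + (c_1(t-s))^{-1/\theta}|x-y|)^{-N-\theta}$. Since $|x-y| \lesssim \delta$ and $t-s \gtrsim \delta^\theta$, the argument $(c_1(t-s))^{-1/\theta}|x-y|$ is bounded, so $\Gamma_\theta(x-y, c_1(t-s)) \gtrsim (t-s)^{-N/\theta}$. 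Combining everything, $\psi_\delta(x,1) \gtrsim c_\delta \int_{3/4}^{1-8\delta^\theta}\!\!\int_{B'} (1-s)^{-N/\theta}(1-s)^{-1}\,\di y\,\di s$ — wait, this would blow up rather than stay bounded, so I must be more careful: I should instead pair the $(t-s)^{-N/\theta}$ from $\Gamma_\theta$ against the volume of the $y$-ball chosen to \emph{depend on} $t-s$, taking $|y| \lesssim (t-s)^{1/\theta}$, whose volume is $\sim (t-s)^{N/\theta}$, which cancels the singular factor and leaves the clean integral $c_\delta \int (1-s)^{-1}\di s \sim c_\delta \cdot c_\delta^{-1} = c$.

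So the refined argument is: for fixed $s \in (3/4, 1-8\delta^\theta)$, integrate $y$ over $B(0, r_s)$ with $r_s := \min\{\delta, (t-s)^{1/\theta}\}$ (recalling $t=1$). On this set $|x-y| \le 2 r_s \le 2(1-s)^{1/\theta}$, so $\Gamma_\theta(x-y,c_1(1-s)) \ge C(1-s)^{-N/\theta}$ as above, and $f_\delta(|y|^\theta + 1-s) \ge (1-s+|y|^\theta)^{-1} \ge c(1-s)^{-1}$ provided $|y|^\theta + 1-s < 1/4$ and $> 4\delta^\theta$, both of which hold in our ranges. The inner $y$-integral then contributes $|B(0,r_s)| \cdot (1-s)^{-N/\theta}(1-s)^{-1} \sim r_s^N (1-s)^{-N/\theta}(1-s)^{-1}$; since for $s$ close to $1$ we have $r_s = (1-s)^{1/\theta}$ (as $(1-s)^{1/\theta} \le (8\delta^\theta)^{1/\theta} < \delta$ when $1-s < 8\delta^\theta$... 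I should double-check the cutoff so that $r_s = (1-s)^{1/\theta}$ on the whole integration range, which forces $1-s \le \delta^\theta$; adjusting the $s$-interval to $(1-\delta^\theta, 1-8\delta^\theta)$ — impossible since $8\delta^\theta > \delta^\theta$ — so instead I keep $r_s = \delta$ throughout and absorb the resulting constant, integrating only over $1-s \in (\delta^\theta, 1/4)$ so that $(1-s)^{-N/\theta} \cdot \delta^N \le$ bounded is false too). I see that the cleanest bookkeeping is to integrate $s$ over a \emph{fixed} subinterval like $(1 - 1/8, \, 1 - 4\delta^\theta)$ with $r_s \equiv \delta$, accept $\Gamma_\theta(x-y,c_1(1-s)) \ge C (1-s)^{-N/\theta}\bigl(1 + (1-s)^{-1/\theta}\delta\bigr)^{-N-\theta}$, and on the sub-range $1-s \ge \delta^\theta$ this is $\ge C(1-s)^{-N/\theta}$ with the $(1-s)^{-N/\theta}$ now killed by... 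The cleanest resolution, which I will adopt, is: split $s$ so that $1-s \in (\delta^\theta, c)$ for a small absolute constant $c$; on the dyadic piece $1-s \sim 2^{-k}$ with $\delta^\theta \le 2^{-k} \le c$, integrate $y$ over $B(0, 2^{-k/\theta})$, getting contribution $\sim 2^{-kN/\theta} \cdot (2^{-k})^{-N/\theta} \cdot 2^k \cdot 2^{-k} = 1$ per dyadic scale, and there are $\sim \log(1/\delta^\theta) \sim c_\delta^{-1}$ scales, so the total is $\sim c_\delta \cdot c_\delta^{-1} = c > 0$, independent of $\delta$.

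\medskip

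The main obstacle, and where I would spend the most care, is exactly this bookkeeping: choosing the $y$-ball radius as a function of the time variable so that the scaling factor $(t-s)^{-N/\theta}$ from $\Gamma_\theta$ is absorbed by the volume, checking that on the chosen set the argument of $f_\delta$ genuinely lands in the region where $f_\delta(\tau) = \tau^{-1}$ (this needs $4\delta^\theta < |y|^\theta + (t-s) < 1/4$, controlling both endpoints simultaneously against the constraint $\delta^\theta < 1/64$), and verifying the lower bounds on the two distance factors $1 \wedge \dist_B(\cdot)^{\theta/2}/\sqrt{t-s}$ in \eqref{eq:TSE} hold uniformly — which is where I use $x \in B(0,\delta)$, $\delta$ small, so $\dist_B(x) \ge 1/2$, and $t - s \le 1$, giving each factor $\ge 1 \wedge 2^{-(\theta+1)/2} = c$. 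Once these ranges are fixed consistently, summing the $\sim c_\delta^{-1}$ essentially-equal dyadic contributions and multiplying by $c_\delta$ gives the uniform lower bound $c$, completing the proof.
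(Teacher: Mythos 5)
Your proposal is correct and follows essentially the same route as the paper: Duhamel's formula, the lower bound in \eqref{eq:TSE} with the distance factors bounded below by constants, the scaling/decay bounds \eqref{eq:F1}--\eqref{eq:F2} to get $\Gamma_\theta(x-y,c_1(1-s))\gtrsim (1-s)^{-N/\theta}$, and the key point of integrating $y$ over a ball of radius $\sim(1-s)^{1/\theta}$ so that the volume cancels this factor and the remaining $s$-integral of $(1-s)^{-1}$ produces $\log(1/\delta^\theta)\sim c_\delta^{-1}$, cancelling $c_\delta$. Your dyadic bookkeeping is just a discretized version of the paper's continuous integral over $s\in(\delta^\theta,1-4\delta^\theta)$; only the endpoint constants (e.g.\ requiring $1-s\ge 4\delta^\theta$ and $2(1-s)<1/4$ so that the argument of $f_\delta$ lies in $(4\delta^\theta,1/4)$) need the minor tightening you already flag.
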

\begin{proof}
We shall calculate  $\psi_\delta(x,1) (= \varphi_\delta(x,0))$. By \eqref{eq:TSE} and \eqref{eq:Duhamel} we have
\begin{equation}
\label{eq:2.3}
\begin{split}
\psi_\delta(x,1) 
&= c_\delta\int_0^1 \int_BG_B(x,y,1-s) f_\delta(|y|^\theta + 1-s) \,\di y\di s\\
&\ge C_1  c_\delta\int_0^1 \int_B \left(1\wedge \frac{\dist_B(x)^\frac{\theta}{2}}{\sqrt{1-s}}\right)\left(1\wedge \frac{\dist_B(y)^\frac{\theta}{2}}{\sqrt{1-s}}\right)\\
&\qquad\qquad\qquad\qquad\qquad\qquad \times \Gamma_\theta(x-y,c_1(1-s)) f_\delta(|y|^\theta+1-s) \, \di y \di s\\
&\ge C_1 c_\delta \int_{\delta^\theta}^{1-4\delta^\theta} \int_{B(0, 2^{-1}(1-s)^\frac{1}{\theta})} \left(1\wedge \frac{\dist_B(x)^\frac{\theta}{2}}{\sqrt{1-s}}\right)\left(1\wedge \frac{\dist_B(y)^\frac{\theta}{2}}{\sqrt{1-s}}\right)\\
&\qquad\qquad\qquad\qquad\qquad\qquad \times \Gamma_\theta(x-y,c_1(1-s)) f_\delta(|y|^\theta+1-s) \, \di y \di s\\
\end{split}
\end{equation}
for $x\in B$.
Since $x\in B(0,\delta)$, $y\in B(0,2^{-1}(1-s)^{1/\theta})$, and $s\in(\delta^\theta, 1-4\delta^\theta)$, we have
\begin{equation*}
\dist_B(x) =1-|x|> 1-\delta = \frac{1-\delta}{(1-\delta^\theta)^\frac{1}{\theta}}(1-\delta^\theta)^\frac{1}{\theta}\ge C(1-\delta^\theta)^\frac{1}{\theta} \ge  C(1-s)^\frac{1}{\theta}, 
\end{equation*}
\begin{equation*}
\dist_B(y)= 1-|y| \ge 1- \frac{1}{2}(1-s)^\frac{1}{\theta}\ge \frac{1}{2} (1-s)^\frac{1}{\theta},
\end{equation*}
and
\[
|x-y| \le |x| + |y| <\delta + \frac{1}{2}(1-s)^\frac{1}{\theta} \le C(1-s)^\frac{1}{\theta}.
\]
These together with \eqref{eq:F1}, \eqref{eq:F2}, and \eqref{eq:2.3} imply that
\begin{equation*}
\begin{split}
\psi_\delta(x,1) 
&\ge C  c_\delta\int_{\delta^\theta}^{1-4\delta^\theta} \int_{B(0, 2^{-1}(1-s)^\frac{1}{\theta})} \Gamma_\theta(x-y,c_1(1-s)) f_\delta(|y|^\theta+1-s) \, \di y \di s\\
&\ge C c_\delta \int_{\delta^\theta}^{1-4\delta^\theta}\int_{B(0, 2^{-1}(1-s)^\frac{1}{\theta})} (1-s)^{-\frac{N}{\theta}} f_\delta(|y|^\theta+1-s) \, \di y \di s.\\
\end{split}
\end{equation*}
Since $s<1-4\delta^\theta$, we have $4\delta^\theta < |y|^\theta+1-s$ for $y\in B(0, 2^{-1}(1-s)^{1/\theta})$.
This together with $|y| < (1-s)^{1/\theta}$ implies that 
\begin{equation*}
\begin{split}
\psi_\delta(x,1) 
&\ge Cc_\delta  \int_{\delta^\theta}^{1-4\delta^\theta}\int_{B(0, 2^{-1}(1-s)^\frac{1}{\theta})} (1-s)^{-\frac{N}{\theta}} f_\delta(2(1-s)) \, \di y \di s\\
&\ge C  c_\delta\int_{\delta^\theta}^{1-4\delta^\theta}f_\delta(2(1-s)) \,  \di s\\
&\ge Cc_\delta  \int_{\{\delta^\theta<s<1-4\delta^\theta\}\cap \{4\delta^\theta < 2(1-s) < 1/4\} }(1-s)^{-1} \,  \di s\\
&= C c_\delta\int_{{7/8}}^{1-4\delta^\theta}(1-s)^{-1} \,  \di s = -Cc_\delta \log32\delta^\theta = C\\
\end{split}
\end{equation*}
for all $x\in B(0,\delta)$. Thus, the proof is complete.
\end{proof}
\section{Proof of the main theorem.}

In this section we prove Theorem~\ref{Theorem:NS}.
\begin{lemma}
\label{Lemma:3.1}
Let $p=p_{\theta,N}$ and  $\delta>0$ be such that $\delta^\theta\in (0,1/64)$.
If \eqref{eq:kihon} holds with $\varphi (x,t) = \varphi_{\delta,\tau}(x,t) := \varphi_\delta(x/\tau^{1/\theta},t/\tau)$ for some $\tau\in (T/2,T)$,
then assertion~{\rm (ii)} in Theorem~{\rm \ref{Theorem:NS}} holds.
\end{lemma}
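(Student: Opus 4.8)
The plan is to substitute $\varphi=\varphi_{\delta,\tau}$ into the basic inequality \eqref{eq:kihon} and to exploit that the critical exponent $p=p_{\theta,N}$ makes the resulting estimate invariant under the parabolic rescaling $(x,t)\mapsto(\tau^{1/\theta}x,\tau t)$. Since $p-1=\theta/N$, one has $\tfrac{p}{p-1}=\tfrac{N+\theta}{\theta}=\tfrac{N}{\theta}+1$ throughout. For the left-hand side, $\varphi_{\delta,\tau}(x,0)=\varphi_\delta(x/\tau^{1/\theta},0)$, so the pointwise lower bound \eqref{eq:LEAHE} forces $\varphi_{\delta,\tau}(x,0)\ge c$ whenever $|x|<\delta\tau^{1/\theta}$, whence the left-hand side is at least $c^{p/(p-1)}\mu(B(0,\delta\tau^{1/\theta}))$. (The supremum over centres $z$ in assertion (ii) is recovered from the translation invariance of (SHE): apply the conclusion to the shifted supersolution $u(\cdot-z,\cdot)$, or equivalently use the translates $\varphi_{\delta,\tau}(\cdot-z,\cdot)$ in \eqref{eq:kihon}.)

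For the right-hand side I would compute $(-\partial_t+(-\Delta)^{\theta/2})\varphi_{\delta,\tau}$. Since $\varphi_\delta$ solves (AHE) classically in the interior of $B$ and is supported in $\overline{B}$ (the exterior Dirichlet condition), on $\supp\varphi_{\delta,\tau}\subseteq\overline{B(0,\tau^{1/\theta})}\times[0,\tau]$ one never meets the exterior region where $(-\Delta)^{\theta/2}\varphi_\delta$ fails to vanish; together with the fact that $\partial_t$ and $(-\Delta)^{\theta/2}$ each scale by $\tau^{-1}$, this gives $(-\partial_t+(-\Delta)^{\theta/2})\varphi_{\delta,\tau}(x,t)=\tau^{-1}c_\delta f_\delta(\tau^{-1}(|x|^\theta+t))$ a.e.\ on $\supp\varphi_{\delta,\tau}$. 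Substituting $y=x/\tau^{1/\theta}$, $s=t/\tau$, so $\di x\,\di t=\tau^{N/\theta+1}\,\di y\,\di s$, and using the identity $-\tfrac{p}{p-1}+\tfrac{N}{\theta}+1=0$, every power of $\tau$ cancels and the right-hand side of \eqref{eq:kihon} is dominated by $C\,c_\delta^{p/(p-1)}\,I_\delta$ with $I_\delta:=\int_0^1\!\int_B f_\delta(|y|^\theta+s)^{p/(p-1)}\,\di y\,\di s$.

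The main computation is the bound on $I_\delta$. Using $f_\delta(\sigma)\le\sigma^{-1}$ on $(2\delta^\theta,1/2)$ and $f_\delta\equiv0$ off this interval, together with $\tfrac{p}{p-1}=\tfrac{N}{\theta}+1$, I would pass to polar coordinates in $y$, substitute $\rho=|y|^\theta$ (so $|y|^{N-1}\,\di|y|=\tfrac{1}{\theta}\rho^{N/\theta-1}\,\di\rho$), and then change $s$ to $w=\rho+s$; the inner integral $\int_0^w\rho^{N/\theta-1}\,\di\rho=\tfrac{\theta}{N}w^{N/\theta}$ cancels the weight $w^{-N/\theta-1}$ exactly, leaving $I_\delta\le C\int_{2\delta^\theta}^{1/2}\tfrac{\di w}{w}=C\log\tfrac{1}{4\delta^\theta}$. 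Since $-\log(4\delta^\theta)$ and $-\log(32\delta^\theta)$ are comparable for $\delta^\theta\in(0,1/64)$, and $c_\delta^{p/(p-1)}=(-\log(32\delta^\theta))^{-N/\theta-1}$, feeding this back into \eqref{eq:kihon} produces, uniformly in $z$,
\[
\mu\big(B(z,\delta\tau^{1/\theta})\big)\ \le\ C\,\big(-\log(32\delta^\theta)\big)^{-N/\theta}.
\]

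It then remains to match parameters. Fix $\tau=3T/4\in(T/2,T)$. For $0<\sigma<(3T/256)^{1/\theta}$, set $\delta:=\sigma\tau^{-1/\theta}$, so $\delta^\theta=\tfrac{4}{3T}\sigma^\theta\in(0,1/64)$ and $\delta\tau^{1/\theta}=\sigma$; then $-\log(32\delta^\theta)=\theta\log(T^{1/\theta}/\sigma)+\log\tfrac{3}{128}$ is comparable to $\log(e+T^{1/\theta}/\sigma)$ on this range (it stays bounded below by a positive constant and behaves like $\log(T^{1/\theta}/\sigma)$ as $\sigma\to0$), which yields the bound in (ii) for such $\sigma$. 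For $\sigma\in[(3T/256)^{1/\theta},T^{1/\theta})$ the ratio $T^{1/\theta}/\sigma$ is bounded, so covering $B(z,\sigma)$ by a number of balls of radius $\tfrac{1}{2}(3T/256)^{1/\theta}$ bounded in terms of $N$ and $\theta$ and invoking the case already proved gives $\sup_z\mu(B(z,\sigma))\le C$, again dominated by $C(\log(e+T^{1/\theta}/\sigma))^{-N/\theta}$; collecting constants gives $\gamma$. I expect the one genuine technicality to be justifying that $\varphi_{\delta,\tau}$ is a legitimate test function in \eqref{eq:kihon}: it is classical inside $B$ by the remark after (AHE), and by Lemma \ref{Lemma:TSE} it vanishes like $\dist_B(\cdot)^{\theta/2}$ at $\partial B$ (so $\varphi_{\delta,\tau}^{p/(p-1)}$ vanishes like $\dist_B(\cdot)^{(N+\theta)/2}$), and this is handled by a routine mollification/cut-off, or by first extending \eqref{eq:kihon} to test functions of this regularity.
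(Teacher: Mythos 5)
Your proposal is correct and follows essentially the same route as the paper: lower-bound the left side of \eqref{eq:kihon} via \eqref{eq:LEAHE}, use that $\varphi_\delta$ solves {\rm (AHE)} together with the exact cancellation of powers of $\tau$ at $p=p_{\theta,N}$ to reduce the right side to $c_\delta^{p/(p-1)}\int_0^1\int_B f_\delta(|y|^\theta+s)^{p/(p-1)}\,\di y\di s\le Cc_\delta^{1/(p-1)}$, and finish with a translation plus a covering/rescaling step to reach all $\sigma\in(0,T^{1/\theta})$. The only differences are cosmetic (your substitution $w=|y|^\theta+s$ versus the paper's polar change of variables, and fixing $\tau=3T/4$ with a covering for large $\sigma$ versus the paper's Vitali argument with $\delta_1^\theta=64\delta^\theta$), and your closing worry about $\varphi_{\delta,\tau}$ being an admissible test function is moot here since that is exactly what the lemma's hypothesis assumes.
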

\begin{proof}
Let $\tau\in(T/2,T)$.
Note that $\supp\, \varphi_{\delta,\tau} \subset \overline{B(0,\tau^{1/\theta})}\times[0,\tau]$.
By the assumption of Lemma~\ref{Lemma:3.1}, we have
\begin{equation}
\label{eq:3.1}
\begin{split}
&\int_{\mathbb{R}^N} \varphi_{\delta,\tau}\left(x,0\right)^\frac{p}{p-1}\, \di\mu(x) \\
&\qquad\qquad\le C \int_0^\tau\int_{B(0,\tau^\frac{1}{\theta})} \left|(-\partial_t + (-\Delta)^\frac{\theta}{2}) \varphi_{\delta,\tau}\left(x,t\right)\right|^\frac{p}{p-1} \, \di x\di t.
\end{split}
\end{equation}
It follows from \eqref{eq:LEAHE} that 
\[
\varphi_{\delta,\tau}\left(x,0\right) = \varphi_{\delta}\left(\frac{x}{\tau^\frac{1}{\theta}},0\right) \ge c\quad \mbox{for all} \quad x\in B(0, \delta \tau^\frac{1}{\theta}).
\]
This implies that 
\begin{equation}
\label{eq:3.2}
\begin{split}
\int_{\mathbb{R}^N} \varphi_{\delta,\tau}\left(x,0\right)^\frac{p}{p-1}\, \di\mu(x) 
&\ge \int_{B(0, \delta \tau^\frac{1}{\theta})} \varphi_{\delta,\tau}\left(x,0\right)^\frac{p}{p-1}\, \di\mu(x)\\
& \ge c^\frac{p}{p-1} \mu (B(0, \delta \tau^\frac{1}{\theta})). 
\end{split}
\end{equation}
On the other hand, by the definition of $\varphi_{\delta}$, we see that
\begin{equation*}
\begin{split}
&\int_0^\tau\int_{B(0,\tau^\frac{1}{\theta})} \left|(-\partial_t + (-\Delta)^\frac{\theta}{2}) \varphi_{\delta,\tau}\left(x,t\right)\right|^\frac{p}{p-1} \, \di x\di t\\
&= \tau^{-\frac{p}{p-1}}\int_0^\tau\int_{B(0,\tau^\frac{1}{\theta})} \left|[(-\partial_t + (-\Delta)^\frac{\theta}{2}) \varphi_{\delta}]\left(\frac{x}{\tau^\frac{1}{\theta}}, \frac{t}{\tau}\right)\right|^\frac{p}{p-1} \, \di x\di t\\
& =c_\delta^\frac{p}{p-1}\tau^{-\frac{p}{p-1}} \int_0^\tau\int_{B(0,\tau^\frac{1}{\theta})} f_\delta\left(\frac{|x|^\theta}{\tau}+ \frac{t}{\tau}\right)^\frac{p}{p-1} \, \di x\di t\\
&\le Cc_\delta^\frac{p}{p-1}\tau^{\frac{N}{\theta}-\frac{1}{p-1}} \int_0^1\int_{B(0,1)} f_\delta\left(|x|^\theta+ t\right)^\frac{p}{p-1} \, \di x\di t\\
&= Cc_\delta^\frac{p}{p-1} \int_0^1\int_{B(0,1)} f_\delta\left(|x|^\theta+ t\right)^\frac{p}{p-1} \, \di x\di t,
\end{split}
\end{equation*}
where we used $p = p_{\theta, N}$.
We shall calculate the integral in the above inequality.
By the definition of $f_\delta$ we have
\begin{equation*}
\begin{split}
& \int_0^1\int_{B(0,1)} f_\delta\left(|x|^\theta+ t\right)^\frac{p}{p-1} \, \di x\di t\\
&\le \int\int_{\{2\delta^\theta\le |x|^\theta+t \le 1/2\}} (|x|^\theta+ t)^{-\frac{p}{p-1}} \, \di x\di t\\
&= 2 \int\int_{\{2\delta^\theta\le |x|^\theta+s^2 \le 1/2\}} (|x|^\theta+ s^2)^{-\frac{p}{p-1}} s\, \di x\di s\\
&= C \int\int_{\{2\delta^\theta\le r_1^\theta+s^2 \le 1/2, r_1>0\}} (r_1^\theta+ s^2)^{-\frac{p}{p-1}} sr_1^{N-1}\, \di r_1\di s\\
&= \frac{2C}{\theta} \int\int_{\{2\delta^\theta\le r_2^2+s^2 \le 1/2, r_2>0\}} (r_2^2+ s^2)^{-\frac{p}{p-1}} sr_2^{\frac{2N}{\theta}-1}\, \di r_2\di s\\
&\le C\int\int_{\{2\delta^\theta\le r_2^2+s^2 \le 1/2\}} (\sqrt{r_2^2+ s^2})^{\frac{2N}{\theta}-\frac{2p}{p-1}} \, \di r_2\di s = C\int_{\sqrt{2\delta^\theta}}^{\sqrt{1/2}} r^{-1} \, \di r \le C c_\delta^{-1},
\end{split}
\end{equation*}
where in the third line we used the change of variables $s=\sqrt{t}$,
in the fourth line we used the change of variables $r_1=|x|$,
in the fifth line we used the change of variables $r_2 = r_1^{\theta/2}$,
and 
in the last line we used the change of variables $r= \sqrt{r_2^2+s^2}$.
Summarizing the above two calculations, we obtain
\begin{equation}
\label{eq:3.3}
\begin{split}
&\int_0^\tau\int_{B(0,\tau^\frac{1}{\theta})} \left|(-\partial_t + (-\Delta)^\frac{\theta}{2}) \varphi_{\delta,\tau}\left(x,t\right)\right|^\frac{p}{p-1} \, \di x\di t \\
&\le C c_\delta^{\frac{1}{p-1}} 
\le C \left[\log\left(e+ \frac{1}{\delta^\theta}\right)\right]^{-\frac{1}{p-1}}.
\end{split}
\end{equation}
%
Combining \eqref{eq:3.1}, \eqref{eq:3.2}, and \eqref{eq:3.3}, we obtain
\[
\mu (B(0, \delta \tau^\frac{1}{\theta})) \le C \left[\log\left(e+ \frac{1}{\delta^\theta}\right)\right]^{-\frac{1}{p-1}}.
\]
 By a translation, we see that  
 \[
\mu (B(z, \delta \tau^\frac{1}{\theta})) \le C \left[\log\left(e+ \frac{1}{\delta^\theta}\right)\right]^{-\frac{1}{p-1}}
\]
for all $z\in\mathbb{R}^N$ and $\delta>0$ such that $\delta^\theta\in (0,1/64)$.

Hereafter, we verify that the above inequality holds  for all 
$\delta\in (0,1)$ with $\tau=T$.
For this purpose, an appropriate covering lemma is used (see {\it e.g.}~\cite[Lemma~2.2]{BH24}).
Fix $\delta_1>0$ so that $\delta_1^\theta = 64 \delta^\theta$. Note that $\delta_1\in(0,1)$.
We consider the following family of balls $\{B(z',\delta \tau^{1/\theta}/5 ); z' \in B(z,\delta _1T^{1/\theta} )\}$ which covers $B(z,\delta _1T^{1/\theta} )$.
By Vitali's covering lemma (see {\it e.g.}~\cite[Section~1.5]{EG15}), we can extract a disjoint family of balls $\{B(z_j,\delta \tau^{1/\theta}/5 ); z_j \in B(z,\delta _1T^{1/\theta} ), j\in J\}$
for some countable family of indices $J$ satisfying
\[
B(z, \delta_1 T^\frac{1}{\theta}) \subset \bigcup_{j\in J} B(z_j, \delta \tau^\frac{1}{\theta}).
\]
By the construction, it follows that
\begin{equation*}
\begin{split}
\omega_N \left(\frac{\delta \tau^\frac{1}{\theta}}{5}\right)^N  \sharp J =  \sum_{j\in J} \left|B\left(z_j, \frac{\delta \tau^\frac{1}{\theta}}{5}\right)\right|
&\le \left|B\left(z, \delta_1 T^\frac{1}{\theta}+ \frac{\delta \tau^\frac{1}{\theta}}{5}\right)\right|\\
& = \omega_N \left(\delta_1 T^\frac{1}{\theta} + \frac{\delta \tau^\frac{1}{\theta}}{5}\right)^N,
\end{split}
\end{equation*}
where $\sharp J$ is the cardinal number of $J$ and $\omega_N = |B(0,1)|$.
This implies that 
\begin{equation*}
\begin{split}
\sharp J \le C\left(\frac{\delta \tau^\frac{1}{\theta}}{5}\right)^{-N} \left[ (\delta_1T^\frac{1}{\theta})^N + \left(\frac{\delta \tau^\frac{1}{\theta}}{5}\right)^{N}\right]
&\le C\left[\left(\frac{\delta_1 T^\frac{1}{\theta}}{\delta \tau^\frac{1}{\theta}}\right)^N+1\right] \\
&\le C\left(\frac{\delta_1}{\delta}\right)^N \le C.
\end{split}
\end{equation*}
Here, we used $\tau \in (T/2,T)$.
Therefore, we see that
\begin{equation*}
\begin{split}
& \mu (B(z, \delta_1T^\frac{1}{\theta})) 
\le \sum_{j\in J}  \mu (B(z_j, \delta \tau^\frac{1}{\theta})) \\
&\le C\sharp J  \left[\log\left(e+ \frac{1}{\delta^\theta}\right)\right]^{-\frac{1}{p-1}}
\le C \left[\log\left(e+ \frac{1}{\delta_1^\theta}\right)\right]^{-\frac{1}{p-1}}\\
&= C \left[\log\left(e+ \frac{T}{\delta_1^\theta  T}\right)\right]^{-\frac{N}{\theta}}
\le C \left[\log\left(e+ \frac{T^\frac{1}{\theta}}{\delta_1  T^\frac{1}{\theta}}\right)\right]^{-\frac{N}{\theta}}
\end{split}
\end{equation*}
for all $z\in \mathbb{R}^N$ and $\delta_1\in(0,1)$.
Set $\sigma := \delta_1 T^{1/\theta} \in (0, T^{1/\theta})$.
Since $z\in \mathbb{R}^N$ is arbitrary, we get 
\[
 \sup_{z\in\mathbb{R}^N}\mu (B(z, \sigma)) \le  C \left[\log\left(e+ \frac{T^\frac{1}{\theta}}{\sigma}\right)\right]^{-\frac{N}{\theta}}
\]
for all $\sigma\in (0,T^{1/\theta})$.
Thus, the proof is complete.
\end{proof}
\begin{proof}[Proof of Theorem~{\rm \ref{Theorem:NS}}]
We divide the proof into two cases.

\noindent \underline{{\bf Case:} $p\neq p_{\theta,N}$.}
Let $\sigma\in (0,T^{1/\theta})$ and $\zeta\in C^\infty_0(\mathbb{R}^N)$ be such that
\[
0\le \zeta \le 1\quad\mbox{in}\quad \mathbb{R}^N, \qquad \zeta =1 \quad \mbox{in} \quad B(0,1/2), \qquad \supp\, \zeta \subset B(0,1).
\]
Set $\zeta_\sigma(x) := \zeta(x/\sigma)$ for $x\in \mathbb{R}^N$.
In addition, let $\psi \in C^\infty([0,1])$ be such that 
\[
0\le \psi \le 1\quad\mbox{in}\quad [0,1], \qquad \psi =1 \quad \mbox{in} \quad [1,1/2], \qquad \psi =0 \quad\mbox{in} \quad [3/4,1].
\]
Set $\psi_\sigma (t) := \psi(t/\sigma^\theta)$ for $t\in[0,\infty)$.
Then 
\[
\zeta_\sigma(x) \psi_\sigma (t)  \in C^\infty_0 (\mathbb{R}^N\times[0, 3T/4 ]) \quad \mbox{with} \quad 
\zeta_\sigma(x) \psi_\sigma (3T/4)=0 \quad \mbox{for all}\quad x\in \mathbb{R}^N. 
\]
We can substitute $\varphi(x,t)=\zeta_\sigma(x) \psi_\sigma (t) $ into \eqref{eq:kihon} and obtain
\begin{equation}
\label{eq:C0}
\int_{\mathbb{R}^N} \zeta_\sigma(0)^\frac{p}{p-1}\, \di\mu \le C \int_0^{\sigma^\theta}\int_{B(0,\sigma)} 
[|\partial_t \psi_\sigma|^\frac{p}{p-1} + |(-\Delta)^\frac{\theta}{2} \zeta_\sigma|^\frac{p}{p-1}] \, \di x\di t.
\end{equation}
Since $\zeta_\sigma\equiv1$ on $B(0,\sigma/2)$, we have
\begin{equation}
\label{eq:C00}
\int_{\mathbb{R}^N} \zeta_\sigma(0)^\frac{p}{p-1}\, \di\mu  \ge \int_{B(0,\sigma/2)} \zeta_\sigma(0)^\frac{p}{p-1}\, \di\mu   = \mu (B(0,\sigma/2)).
\end{equation}
On the other hand, since
\begin{equation*}
\begin{split}
&|\partial_t \psi_\sigma(t)| = \sigma^{-\theta}|\partial_t\psi(t/\sigma^\theta)| \le C\sigma^{-\theta}
\quad\mbox{and}\quad
(-\Delta)^\frac{\theta}{2} \zeta_\sigma (x) = \sigma^{-\theta}  (-\Delta)^\frac{\theta}{2} \zeta(x/\sigma)
\end{split}
\end{equation*}
for all $x\in B(0,\sigma)$ and $t\in [0,\sigma^\theta)$, and $\zeta\in C^\infty_0(\mathbb{R}^N)\subset H^{\theta, p/(p-1)}(\mathbb{R}^N)$ (see {\it e.g.}~\cite[Theorem~7.38]{Adams75}),  we have
\begin{equation}
\label{eq:C000}
\begin{split}
&\int_0^{\sigma^\theta}\int_{B(0,\sigma)} 
[|\partial_t \psi_\sigma|^\frac{p}{p-1} + |(-\Delta)^\frac{\theta}{2} \zeta_\sigma|^\frac{p}{p-1}] \, \di x\di t\\
&\le C\sigma^{N-\frac{\theta }{p-1}} + \sigma^{-\frac{\theta}{p-1}}\int_{B(0,\sigma)} |(-\Delta)^\frac{\theta}{2} \zeta(x/\sigma)|^\frac{p}{p-1} \, \di x\\
&\le C\sigma^{N-\frac{\theta }{p-1}} +C \sigma^{N-\frac{\theta}{p-1}}\int_{B(0,1)} |(-\Delta)^\frac{\theta}{2} \zeta(x)|^\frac{p}{p-1} \, \di x \le C \sigma^{N-\frac{\theta}{p-1}}\\
\end{split}
\end{equation}
for all $\sigma\in (0,T^{1/\theta})$.
Combining \eqref{eq:C0}, \eqref{eq:C00}, and \eqref{eq:C000}, we get
\[
\mu (B(0,\sigma/2)) \le C \sigma^{N-\frac{\theta}{p-1}}
\]
for all $\sigma\in (0,T^{1/\theta})$.  By a translation,   
\[
\mu (B(z,\sigma/2)) \le C \sigma^{N-\frac{\theta}{p-1}}
\]
for all $z\in \mathbb{R}^N$ and all $\sigma\in (0,T^{1/\theta})$.
By the similar argument to the proof of Lemma~\ref{Lemma:3.1}, we obtain
\[
\sup_{z\in\mathbb{R}^N}\mu (B(z,\sigma)) \le C \sigma^{N-\frac{\theta}{p-1}}
\]
for all $\sigma\in (0,T^{1/\theta})$. Thus, the proof is complete.

\vspace{5pt}
\noindent \underline{{\bf Case:} $p= p_{\theta,N}$.}
Fix $\tau\in(T/2,T)$.
Let  $\delta>0$ be such that $\delta^\theta\in(0,1/64)$.
By virtue of Lemma~\ref{Lemma:3.1},
it suffices to verify that \eqref{eq:kihon} holds with $\varphi=\varphi_{\delta, \tau}$.
Note that $\varphi_{\delta, \tau} \not\in C^\infty_0(\mathbb{R}^N\times[0,\tau])$,
therefore, we use an approximation.
Let $\eta\in C^\infty_0(\mathbb{R}^N)$ be such that $\eta$ is radially symmetric,
\[
\eta\ge0 \quad\mbox{in} \quad \mathbb{R}^N, \qquad \supp\, \eta \subset \overline{B(0,1)}, \qquad \int_{\mathbb{R}^N} \eta(x) \, \di x = 1.
\]
For any $\epsilon >0$, set  
\[
\eta_{\epsilon} (x) := (\epsilon \tau^\frac{1}{\theta})^{-N} \eta\left(\frac{x}{\epsilon \tau^\frac{1}{\theta}}\right), \qquad x\in\mathbb{R}^N.
\]
We have 
\begin{align}
	&\notag \varphi_{\delta,\tau} * \eta_\epsilon \in C^\infty_0 (\mathbb{R}^N\times[0,\tau]),\\
	&\notag \supp\, (\varphi_{\delta,\tau} * \eta_\epsilon)  \subset \overline{B(0, (1+\epsilon)\tau^\frac{1}{\theta})} \times[0,\tau],\\	
	&\notag [\varphi_{\delta,\tau} * \eta_\epsilon](x,t) \ge0 \quad \mbox{for all}\quad  x\in \mathbb{R}^N, \, \,t\in[0,\infty),\\
	& \notag [\varphi_{\delta,\tau} * \eta_\epsilon](x,\tau) =0 \quad \mbox{for all}\quad  x\in \mathbb{R}^N.
\end{align}
Then we can substitute $\varphi = \varphi_{\delta,\tau} * \eta_\epsilon$ into \eqref{eq:2.1} and get
\begin{equation}
\label{eq:C1}
\begin{split}
&\int_0^\tau \int_{\mathbb{R}^N} u^p (\varphi_{\delta,\tau} * \eta_\epsilon)^\frac{p}{p-1} \,\di x\di t + \int_{\mathbb{R}^N}  (\varphi_{\delta,\tau} * \eta_\epsilon)(0)^\frac{p}{p-1}\, \di\mu \\
&\le \frac{p}{p-1}\int_0^\tau\int_{B(0, (1+\epsilon)\tau^\frac{1}{\theta})} u  (\varphi_{\delta,\tau} * \eta_\epsilon)^\frac{1}{p-1} (-\partial_t + (-\Delta)^\frac{\theta}{2}) (\varphi_{\delta,\tau} * \eta_\epsilon)\,\di x\di t\\
\end{split}
\end{equation}
Since $\mu$ is a Radon measure on $\mathbb{R}^N$ and  $u^p\in L^1_{\rm loc} (\mathbb{R}^N\times[0,T))$, it is easy to see  that
\begin{equation*}
\begin{split}
& \lim_{\epsilon\to0^+} \int_0^\tau \int_{\mathbb{R}^N} u^p (\varphi_{\delta,\tau} * \eta_\epsilon)^\frac{p}{p-1} \,\di x\di t = \int_0^\tau \int_{\mathbb{R}^N} u^p \varphi_{\delta,\tau}^\frac{p}{p-1} \,\di x\di t, \\
&\lim_{\epsilon\to0^+} \int_{\mathbb{R}^N} (\varphi_{\delta,\tau} * \eta_\epsilon)(0)^\frac{p}{p-1}\, \di\mu = 
\int_{\mathbb{R}^N} \varphi_{\delta,\tau} (0)^\frac{p}{p-1}\, \di\mu.
\end{split}
\end{equation*}

We consider the right hand side of \eqref{eq:C1}.
Since $f_\delta$ is a smooth function, for $x\in B(0, (1-\epsilon)\tau^{1/\theta})$ and $t\in(0,\tau)$
we have
\begin{equation*}
\begin{split}
&(-\partial_t + (-\Delta)^\frac{\theta}{2})(\varphi_{\delta,\tau} * \eta_\epsilon)(x,t) \\
&= 
\int_{B(0, \epsilon \tau^\frac{1}{\theta})} (-\partial_t + (-\Delta)^\frac{\theta}{2}_x)\varphi_{\delta,\tau} (x-y,t)\cdot \eta_\epsilon(y)\, \di y\\
&= \int_{B(0, \epsilon \tau^\frac{1}{\theta})} \tau c_\delta f\left(\frac{|x-y|^\theta}{\tau}+\frac{t}{\tau}\right) \eta_\epsilon(y)\, \di y\\
&\to \tau c_\delta f\left(\frac{|x|^\theta}{\tau}+\frac{t}{\tau}\right)  = (-\partial_t + (-\Delta)^\frac{\theta}{2})\varphi_{\delta,\tau} (x,t) \quad \mbox{as} \quad \epsilon \to 0^+.
\end{split}
\end{equation*}
Since $u\in L^1_{\rm loc} (\mathbb{R}^N\times[0,T))$, this implies that
\begin{equation*}
\begin{split}
&\lim_{\epsilon\to0^+} \int_0^\tau\int_{B(0, (1-\epsilon)\tau^\frac{1}{\theta})} u  (\varphi_{\delta,\tau} * \eta_\epsilon)^\frac{1}{p-1} (-\partial_t + (-\Delta)^\frac{\theta}{2}) (\varphi_{\delta,\tau} * \eta_\epsilon)\,\di x\di t\\
&=\int_0^\tau\int_{B(0, \tau^\frac{1}{\theta})} u  \varphi_{\delta,\tau}^\frac{1}{p-1} (-\partial_t + (-\Delta)^\frac{\theta}{2}) \varphi_{\delta,\tau} \,\di x\di t.
\end{split}
\end{equation*}
We shall check
\begin{equation}
\label{eq:C2}
(-\Delta)^\frac{\theta}{2}_x \eta_\epsilon(x-y) = (-\Delta)^\frac{\theta}{2}_y \eta_\epsilon(y-x) 
\end{equation}
for all $x,y\in\mathbb{R}^N$.
Since $\eta_\epsilon$ is radially symmetric, by the definition of the fractional Laplacian $(-\Delta)^{\theta/2}$, 
\begin{equation*}
\begin{split}
(-\Delta)^\frac{\theta}{2}_x \eta_\epsilon(x-y)  
&= A(N,\theta) P.V.\int_{\mathbb{R}^N} \frac{\eta_\epsilon(x-y)-\eta_\epsilon(x-y-z)}{|x-y-z|^{N+\theta}} \, \di z\\
&= A(N,\theta) P.V.\int_{\mathbb{R}^N} \frac{\eta_\epsilon(x-y)-\eta_\epsilon(x-y+z')}{|x-y+z'|^{N+\theta}} \, \di z'\\
&= A(N,\theta) P.V.\int_{\mathbb{R}^N} \frac{\eta_\epsilon(y-x)-\eta_\epsilon(y-x-z')}{|y-x-z'|^{N+\theta}} \, \di z'\\
&=(-\Delta)^\frac{\theta}{2}_y \eta_\epsilon(y-x),
\end{split}
\end{equation*}
where in the second line we used the change of variables $z'=-z$.
Then \eqref{eq:C2} follows.
Next, we shall check
\begin{equation}
\label{eq:C3}
(-\Delta)^\frac{\theta}{2} \varphi_{\delta,\tau} (x,t) < 0 \quad \mbox{for all} \quad x\in\mathbb{R}^N\setminus B(0,\tau^\frac{1}{\theta}), \,\, t\in(0,\tau).
\end{equation}
Since 
\begin{equation*}
\begin{split}
&\varphi_{\delta,\tau}(x,t)=0 \quad\mbox{for all} \quad x\in\mathbb{R}^N\setminus B(0,\tau^\frac{1}{\theta}),\,\,t\in(0,\tau),\\
&\varphi_{\delta,\tau}(x,t)>0 \quad\mbox{for all}\quad x\in B(0,\tau^\frac{1}{\theta}),\,\, t\in(0,\tau),
\end{split}
\end{equation*}
 by the definition of the fractional Laplacian $(-\Delta)^{\theta/2}$, 
\begin{equation*}
\begin{split}
(-\Delta)^\frac{\theta}{2} \varphi_{\delta,\tau}(x,t)
&= A(N,\theta) P.V.\int_{\mathbb{R}^N} \frac{\varphi_{\delta,\tau} (x,t)-\varphi_{\delta,\tau}(z,t)}{|x-z|^{N+\theta}} \, \di z\\
&= -A(N,\theta) P.V.\int_{\mathbb{R}^N} \frac{\varphi_{\delta,\tau} (z,t)}{|x-z|^{N+\theta}} \, \di z<0.
\end{split}
\end{equation*}

Let $E_\epsilon := B(0, (1+\epsilon) \tau^{1/\theta})\setminus B(0,(1-\epsilon)\tau^{1/\theta})$.
It follows from  \eqref{eq:IBP}, \eqref{eq:C2}, and \eqref{eq:C3} that
for $x\in E_\epsilon$ and  $t\in(0,\tau)$,
\begin{equation}
\label{eq:C5}
\begin{split}
&(-\partial_t + (-\Delta)^\frac{\theta}{2}_x) (\varphi_{\delta,\tau} * \eta_\epsilon)(x,t)\\
&= \int_{\mathbb{R}^N} (-\partial_t + (-\Delta)^\frac{\theta}{2}_x) \eta_\epsilon (x-y) \cdot \varphi_{\delta,\tau}(y,t) \, \di y\\
&= \int_{\mathbb{R}^N} (-\partial_t + (-\Delta)^\frac{\theta}{2}_y) \eta_\epsilon (y-x) \cdot \varphi_{\delta,\tau}(y,t) \, \di y\\
&= \int_{B(0,\tau^\frac{1}{\theta})}  \eta_\epsilon (y-x)(-\partial_t + (-\Delta)^\frac{\theta}{2}_y)\varphi_{\delta,\tau}(y,t) \, \di y\\
&\qquad\qquad\qquad\qquad+ \int_{\mathbb{R}^N\setminus B(0,\tau^\frac{1}{\theta})}\eta_\epsilon (y-x) (-\Delta)^\frac{\theta}{2}_y\varphi_{\delta,\tau}(y,t) \, \di y
\\
&<  \int_{B(0,\tau^\frac{1}{\theta})}  \eta_\epsilon (y-x)(-\partial_t + (-\Delta)^\frac{\theta}{2}_y)\varphi_{\delta,\tau}(y,t) \, \di y\\
&=c_\delta \tau^{-1}\int_{B(0,\tau^\frac{1}{\theta})}  \eta_\epsilon (y-x)f\left(\frac{|y|^\theta}{\tau} + \frac{t}{\tau}\right)\, \di y\\
&=c_\delta \tau^{-1}\int_{\mathbb{R}^N}  \eta_\epsilon (y-x)f\left(\frac{|y|^\theta}{\tau} + \frac{t}{\tau}\right)\, \di y.
\end{split}
\end{equation}
Since $u\in L^1_{\rm loc} (\mathbb{R}^N\times[0,T))$, this implies that
\begin{equation*}
\begin{split}
&\limsup_{\epsilon\to0^+} \int_0^\tau\int_{E_\epsilon} u  (\varphi_{\delta,\tau} * \eta_\epsilon)^\frac{1}{p-1} (-\partial_t + (-\Delta)^\frac{\theta}{2}) (\varphi_{\delta,\tau} * \eta_\epsilon)\,\di x\di t\\
&\le c_\delta \tau^{-1}\limsup_{\epsilon\to0^+} \int_0^\tau\int_{E_\epsilon} u  (\varphi_{\delta,\tau} * \eta_\epsilon)^\frac{1}{p-1}\int_{\mathbb{R}^N}  \eta_\epsilon (x-y)f\left(\frac{|y|^\theta}{\tau} + \frac{t}{\tau}\right)\, \di y \di x\di t=0.\\
\end{split}
\end{equation*}

Now letting $\epsilon\to0^+$ in \eqref{eq:C1}, we obtain 
\begin{equation*}
\begin{split}
&\int_0^\tau \int_{\mathbb{R}^N} u^p \varphi_{\delta,\tau}^\frac{p}{p-1} \,\di x\di t + \int_{\mathbb{R}^N}  \varphi_{\delta,\tau} (0)^\frac{p}{p-1}\, \di\mu \\
&\qquad\qquad\le \frac{p}{p-1}\int_0^\tau\int_{B(0, \tau^\frac{1}{\theta})} u  \varphi_{\delta,\tau}^\frac{1}{p-1} (-\partial_t + (-\Delta)^\frac{\theta}{2}) \varphi_{\delta,\tau}\,\di x\di t.\\
\end{split}
\end{equation*}
By the same argument as in the proof of Lemma~\ref{Lemma:kihon}, we see that \eqref{eq:kihon} holds with $\varphi=\varphi_{\delta, \tau}$. Thus, the proof is complete.
\end{proof}

So far we have focused on the case of $\theta\in(0,2)$.
At the end of this section we give brief comments on the differences between the proofs in the cases of $\theta\in (0,2)$ and  $\theta=2$.

\begin{proof}[Sketch of the proof in the case of $\theta=2$]
The major difference in the proofs appears in \eqref{eq:C5}.
It follows from Hopf's lemma that
\[
\nabla \varphi_{\delta,\tau}(y,t)\cdot n(y) <0 \quad\mbox{for all}\quad y\in\partial B(0,\tau^\frac{1}{2}),\,\, t\in (0,\tau),
\]
where $n(y)$ is the outer normal unit vector at $y\in \partial B(0,\tau^{1/2})$.
Let $x\in E_\epsilon =  B(0, (1+\epsilon) \tau^{1/2})\setminus B(0,(1-\epsilon)\tau^{1/2})$.
By Green's identity, we have
\begin{equation*}
\begin{split}
&(-\partial_t -\Delta_x) (\varphi_{\delta,\tau} * \eta_\epsilon)(x,t)\\
&= \int_{B(0,\tau^\frac{1}{2})} (-\partial_t -\Delta_x) \eta_\epsilon (x-y) \cdot \varphi_{\delta,\tau}(y,t) \, \di y\\
&= \int_{B(0,\tau^\frac{1}{2})} (-\partial_t -\Delta_y) \eta_\epsilon (y-x) \cdot \varphi_{\delta,\tau}(y,t) \, \di y\\
&= \int_{B(0,\tau^\frac{1}{2})}  \eta_\epsilon (y-x)(-\partial_t -\Delta_y)\varphi_{\delta,\tau}(y,t) \, \di y\\
&\qquad\qquad\qquad\qquad+ \int_{\partial B(0,\tau^\frac{1}{2})}\eta_\epsilon (y-x) \nabla \varphi_{\delta,\tau}(y,t)\cdot n(y) \, \di S(y)
\\
&<  \int_{B(0,\tau^\frac{1}{2})}  \eta_\epsilon (y-x)(-\partial_t-\Delta_y)\varphi_{\delta,\tau}(y,t) \, \di y\\
&=c_\delta \tau^{-1}\int_{B(0,\tau^\frac{1}{2})}  \eta_\epsilon (y-x)f\left(\frac{|y|^2}{\tau} + \frac{t}{\tau}\right)\, \di y\\
&=c_\delta \tau^{-1}\int_{\mathbb{R}^N}  \eta_\epsilon (y-x)f\left(\frac{|y|^2}{\tau} + \frac{t}{\tau}\right)\, \di y.
\end{split}
\end{equation*}
The same estimate follows
(compare with \eqref{eq:C5}).
A similar argument then allows us to obtain the desired result.
\end{proof}

\section{Appendix.}
In this section, we prove Proposition~\ref{Proposition:IisQ}.

\begin{proof}[Proof of Proposition~{\rm \ref{Proposition:IisQ}}]
Let $T\in (0,\infty)$ and $\tau\in (0,T)$.
We find $t\in(T-\epsilon,T)$ such that
\begin{equation*}
\begin{split}
\infty>u(x,t) &\ge \int_{0}^{T-2\epsilon} \int_{\mathbb{R}^N} \Gamma_\theta(x-y,t-s) f(u(y,s)) \,\di y \di s\\
&= \int_0^{T-2\epsilon} \int_{\mathbb{R}^N}  (t-s)^{-\frac{N}{\theta}} 
\Gamma_\theta\left(\frac{x-y}{(t-s)^\frac{1}{\theta}},1\right) u(y,s)^p\, \di y\di s\\
&\ge  T^{-\frac{N}{\theta}} \int_0^{T-2\epsilon} \int_{\mathbb{R}^N}  \Gamma_\theta\left(\frac{x-y}{\epsilon^\frac{1}{\theta}},1\right) u(y,s)^p\,\di y \di s
\end{split}
\end{equation*}
for a.a.~$x\in{\mathbb{R}^N}$,
where we used \eqref{eq:F1} and \eqref{eq:F3}.
Since $\epsilon\in(0,T/2)$ is arbitrary, we see that $u^p \in L^1_{\rm loc} (\mathbb{R}^N\times [0,T))$.

For any $\tau\in(0,T)$, let $\varphi\in C^\infty_0(\mathbb{R}^N \times[0,\tau])$ with $\varphi(\tau)=0$. 
It is well-known that
\[
\int_{\mathbb{R}^N} (-\Delta)^\frac{\theta}{2} \Gamma_\theta(x,t) \cdot \varphi(x,t) \, \di x 
= \int_{\mathbb{R}^N}  \Gamma_\theta(x,t) (-\Delta)^\frac{\theta}{2} \varphi(x,t) \, \di x 
\]
for $t\in (0,\tau)$.
This together with the integral by parts implies that
\begin{equation*}
\begin{split}
&\int_{\mathbb{R}^N} \varphi(y,0) \, \di \mu(y)\\
&= \int_{\mathbb{R}^N} \left(\int_0^{\tau}\int_{\mathbb{R}^N}(\partial_t+(-\Delta)^\frac{\theta}{2}_x)\Gamma_\theta(x-y,t)\cdot \varphi(x,t)\,\di x \di t+ \varphi(y,0)\right) \,\di \mu(y)\\
&= \int_{\mathbb{R}^N} \int_0^{\tau}\int_{\mathbb{R}^N}\Gamma_\theta(x-y,t)(-\partial_t+(-\Delta)^\frac{\theta}{2})\varphi(x,t)  \,\di x \di t \di \mu(y)\\
&= \int_0^{\tau_2}\int_{\mathbb{R}^N}\left(\int_{\mathbb{R}^N}\Gamma_\theta(x-y,t)\,\di \mu(y)\right)(-\partial_t+(-\Delta)^\frac{\theta}{2})\varphi(x,t) \,\di x \di t.
\end{split}
\end{equation*}
Similarly, we have
\begin{equation*}
\begin{split}
&\int_0^{\tau}\int_{\mathbb{R}^N} \varphi(y,s)  u(y,s)^p\, \di y \di s\\
&= \int_0^{\tau}\int_{\mathbb{R}^N} \left(\int_s^{\tau}\int_{\mathbb{R}^N}(\partial_t+(-\Delta)_x^\frac{\theta}{2})\Gamma_\theta(x-y,t-s)\cdot \varphi(x,t)\,\di x\di t + \varphi(y,s)\right.\\
&\qquad\times u(y,s)^p\bigg)\,\di y\di s\\
&= \int_0^{\tau}\int_{\mathbb{R}^N} \left(\int_s^{\tau}\int_{\mathbb{R}^N}\Gamma_\theta(x-y,t-s)(-\partial_t+(-\Delta)^\frac{\theta}{2})\varphi(x,t)\,\di x\di t \right)u(y,s)^p\,\di y\di s\\
&= \int_0^{\tau}\int_{\mathbb{R}^N} \left(\int_0^{\tau}\int_{\mathbb{R}^N}\Gamma_\theta(x-y,t-s )u(y,s)^p\,\di y \di s \right)(-\partial_t+(-\Delta)^\frac{\theta}{2})\varphi(x,t)\,\di x \di t.
\end{split}
\end{equation*}
Then
\begin{equation*}
\begin{split}
&\int_{0}^{\tau}\int_{\mathbb{R}^N} u(x,t)(-\partial_t+(-\Delta)^\frac{\theta}{2}) \varphi(x,t)\,\di x\di t\\
&= \int_{0}^{\tau}\int_{\mathbb{R}^N} \biggl(\int_{\mathbb{R}^N}\Gamma_\theta(x-y,t)\, \di \mu(y)\\
&\qquad+\int_0^{\tau}\int_{\mathbb{R}^N}\Gamma_\theta(x-y, t-s)u(y,s)^p\,\di y\di s\biggr)(-\partial_t+(-\Delta)^\frac{\theta}{2})\varphi(x,t)\,\di x\di t\\
&= \int_{\mathbb{R}^N} \varphi(y,0) \, \di \mu(y)+ \int_0^{\tau}\int_{\mathbb{R}^N} \varphi(y,s) u(y,s)^p\, \di y\di s,
\end{split}
\end{equation*}
which implies \eqref{eq:defwsol}. Then Proposition~\ref{Proposition:IisQ} follows.
\end{proof}

	\bibliographystyle{plain}
	\bibliography{ref}

\end{document}